\theoremstyle{plain}
\newtheorem{theorem}{Theorem}[section]
\newtheorem{cor}[theorem]{Corollary}
\newtheorem*{cor*}{Corollary}
\newtheorem{lemma}[theorem]{Lemma}
\newtheorem{prop}[theorem]{Proposition}
\newtheorem*{theoremA}{Theorem A}
\newtheorem*{theoremB}{Theorem B}
\newtheorem*{theoremC}{Theorem C}
\newtheorem*{corollaryD}{Corollary D}
\newtheorem*{remark*}{Remark}
\theoremstyle{definition}
\newtheorem{definition}[theorem]{Definition}
\newtheorem*{definition*}{Definition}
\numberwithin{equation}{section}
\newcommand{\R}{\mathbb{R}}
\newcommand{\N}{\mathbb{N}}
\newcommand{\ii}{\mathbf{i}}
\newcommand{\jj}{\mathbf{j}}
\newcommand{\A}{\mathbf{A}}
\newcommand{\e}{\mathbf{e}}
\DeclareMathOperator{\graph}{graph}
\DeclareMathOperator{\dist}{dist}
\DeclareMathOperator{\Id}{Id}
\DeclareMathOperator{\interior}{int}
\begin{document}
\title{On the dimension of the graph of the classical Weierstrass function}

\author{Krzysztof Bara\'nski}
\address{Krzysztof Bara\'nski, Institute of Mathematics, University of Warsaw, ul. Banacha 2, 02-097 Warszawa, Poland} \email{baranski@mimuw.edu.pl}

\author{Bal\'azs B\'ar\'any}
\address{Bal\'azs B\'ar\'any, Institute of Mathematics, Polish Academy of Sciences, ul. \'Sniadeckich 8, 00-956 Warszawa, Poland \&
Budapest University of Technology and Economics, MTA-BME Stochastics Research Group, P.O.Box 91, 1521 Budapest, Hungary} \email{balubsheep@gmail.com}

\author{Julia Romanowska}
\address{Julia Romanowska, Institute of Mathematics, University of Warsaw, ul. Banacha 2, 02-097 Warszawa, Poland} \email{romanoju@mimuw.edu.pl}

\subjclass[2010]{Primary 28A80, 28A78. Secondary 37C45, 37C40.}
\keywords{Hausdorff dimension, Weierstrass function.}
\thanks{Krzysztof Bara\'nski was partially supported by
Polish NCN Grant N~N201~607940. Bal\'azs B\'ar\'any was supported by the Warsaw Center of Mathematics and Computer Science -- KNOW}

\begin{abstract}
This paper examines dimension of the graph of the famous Weierstrass non-differentiable function
\[
W_{\lambda, b} (x)  = \sum_{n=0}^{\infty}\lambda^n\cos(2\pi b^n x)
\]
for an integer $b \ge 2$ and $1/b < \lambda < 1$. We prove that for every $b$ there exists (explicitly given) $\lambda_b \in (1/b, 1)$ such that the Hausdorff dimension of the graph of $W_{\lambda, b}$ is equal to $D = 2+\frac{\log\lambda}{\log b}$ for every $\lambda\in(\lambda_b,1)$. We also show that the dimension is equal to $D$ for almost every $\lambda$ on some larger interval. This partially solves a well-known thirty-year-old conjecture. Furthermore, we prove that the Hausdorff dimension of the graph of the function
\[
f (x)  = \sum_{n=0}^{\infty}\lambda^n\phi(b^n x)
\]
for an integer $b \ge 2$ and $1/b < \lambda < 1$ is equal to $D$ for a typical $\mathbb Z$-periodic $C^3$ function $\phi$.
\end{abstract}


\maketitle

\section{Introduction and statements}\label{sec:intro}
This paper is devoted to the study of dimension of the graphs of functions of the form
\begin{equation}\label{f^phi}
f_{\lambda,b}^{\phi}(x)=\sum_{n=0}^{\infty}\lambda^n\phi (b^n x)
\end{equation}
for $x\in\R$, where $b>1$, $1/b <\lambda<1$ and $\phi: \mathbb \R \to \mathbb \R$ is a non-constant $\mathbb Z$-periodic Lipschitz continuous piecewise $C^1$ function. The famous example
\[
W_{\lambda,b}(x)=\sum_{n=0}^{\infty}\lambda^n\cos (2\pi b^n x),
\]
for $\phi(x)=\cos(2\pi x)$, was introduced by Weierstrass in 1872 as one of the first examples of a continuous nowhere differentiable function on the real line. In fact, Weierstrass proved the non-differentiability for some values of the parameters, while the complete proof was given by Hardy \cite{Ha} in 1916. Later, starting from the work of Besicovitch and Ursell \cite{BU}, the graphs of $f_{\lambda,b}^{\phi}$ and related functions were studied from a geometric point of view as fractal curves in the plane.

The graph of a function $f_{\lambda, b}^\phi$ of the form \eqref{f^phi} is approximately self-affine with scales $\lambda$ and $1/b$, which suggests that its dimension should be equal to
\[
D = 2 + \frac{\log \lambda}{\log b}.
\]
Indeed, it follows from \cite{HL,KMPY} that every function of the form \eqref{f^phi} either is piecewise $C^1$ (and hence the dimension of its graph is $1$), or the box dimension of its graph is equal to $D$. More generally, the graph of every function of the form
\begin{equation}\label{f}
f(x) = \sum_{n=0}^{\infty} \lambda^n\phi(b^n x + \theta_n),
\end{equation}
where $\theta_n \in \R$, has upper box dimension at most $D$. In \cite{He} it was proved  that the box dimension of the graph of a function of the form \eqref{f}
is equal to $D$ in the case when $b$ is a transcendental number. 

However, the question of determining the Hausdorff dimension of the graph turned out to be much more complicated.
In 1986, Mauldin and Williams \cite{MW} proved that if a function $f$ has the form \eqref{f}, then for given $D$ there exists a constant $C > 0$ such that the Hausdorff dimension of the graph is larger than $D - C/\log b$
for large $b$. Shortly after, Przytycki and Urba\'nski showed in \cite{PU} that
the Hausdorff dimension of the graph of $f$ is larger than $1$ under some weaker assumptions.

Let us note that if $b \ge 2$ is an integer, then the graph of the function $f^\phi_{\lambda, b}$ from \eqref{f^phi} is an invariant repeller for the expanding dynamical system
$\Phi: \R/\mathbb Z \times \R \to \R/\mathbb Z \times \R$,
\begin{equation}\label{Phi}
\Phi(x, y) = \left(bx \text{ (mod } 1),\; \frac{y- \phi(x)}{\lambda}\right)
\end{equation}
with Lyapunov exponents $0 < -\log \lambda < \log b$, which allows to use the methods of ergodic theory of smooth dynamical systems.

On the other hand, if we replace $b^n$ by a sequence $b_n$ with $b_{n+1}/b_n \to \infty$, then the question of determining the Hausdorff and box dimension of graphs of functions \eqref{f} can be solved completely, as proved recently by Carvalho \cite{Ca} and Bara\'nski \cite{B12}. In this case the Hausdorff and upper box dimension need not coincide.

In 1992, Ledrappier \cite{Led} proved that for $\phi(x) = \dist(x, \mathbb Z)$ and $b = 2$, the Hausdorff dimension of the graph of the function $f^\phi_{\lambda, b}$ from \eqref{f^phi} is equal to $D$ provided the infinite Bernoulli convolution $\sum_{n=0}^\infty \pm 2^{(1-D)n}$, with $\pm$ chosen independently with probabilities $(1/2, 1/2)$, has absolutely continuous distribution (by the result of Solomyak \cite{So2}, this holds for almost all $D \in (1, 2)$ with respect to the Lebesgue measure). Analogous result was showed by Solomyak in \cite{So} for a class of functions $\phi$ with discontinuous derivative.

However, the question of determining the Hausdorff dimension of the classical Weierstrass function $W_{\lambda, b}$, for $\phi(x)=\cos(2\pi x)$ and integer $b \ge 2$, has remained open. The conjecture that it is equal to $D$ was formulated by Mandelbrot in 1977 \cite{Ma} and then repeated in many papers, see e.g.~\cite{BL,F1,Hu,Led,MW, PU} and the references therein.
Przytycki and Urba\'nski showed in \cite{PU} that it is greater than $1$.
Rezakhanlou \cite{Rez} proved that the packing dimension of the graph of $W_{\lambda, b}$ is equal to $D$ and in \cite{HL}, Hu and Lau showed the same for the so-called $K$-dimension (both are not smaller than the Hausdorff dimension).

In 1998, Hunt \cite{Hu} proved that if one considers the numbers $\theta_n$ in \eqref{f} as independent random variables with uniform distribution on $[0,1]$, then for many functions $\phi$, including $\phi(x) =\cos(2\pi x)$, the Hausdorff dimension of the graph of $f$ is equal to $D$ almost surely (analogous result for the box dimension was proved by Heurteaux in \cite{He}).

It is interesting to notice that for an integer $b \ge 2$, the function $W_{\lambda, b}$ is the real part of the lacunar (Hadamard gaps) power series
\[
w(z) = \sum_{n=0}^{\infty}\lambda^n z^{b^n}, \qquad z \in \mathbb C, \; |z| \leq 1
\]
on the unit circle, which relates the problem to harmonic analysis and boundary behaviour of analytic maps. For instance, it was proved by Salem and Zygmund \cite{SZ} and Kahane, Weiss and Weiss \cite{KWW} that for $\lambda$ sufficiently close to $1$, the image of the unit circle under $w$ is a Peano curve, i.e.~it covers an open subset of the plane. Moreover, Belov \cite{Be} and Bara\'nski \cite{Ba} showed that in this case the map $w$ does not preserve (forwardly) Borel sets on the unit circle. The complicated topological boundary behaviour of $w$ was also studied recently by Dong, Lau and Liu in \cite{DLL}.

In this paper we solve the Mandelbrot conjecture for every integer $b \ge 2$ in the case when $\lambda$ is sufficiently close to $1$. More precisely, we prove that the Hausdorff dimension of the graph of the classical Weierstrass function $W_{\lambda,b}$ is equal to $D$ for every integer $b \ge 2$, provided $\lambda \in (\lambda_b, 1)$ for some (explicitly given) $\lambda_b \in (1/b, 1)$, with $\lambda_b$ tending to $1/\pi$ for $b \to \infty$. We extend the result for almost every $\lambda$ on some larger interval $(\tilde\lambda_b, 1)$ with $\tilde\lambda_b \sqrt{b} \to 1/\sqrt{\pi}$ as $b \to \infty$.

Moreover, we prove that the Hausdorff dimension of the graph of the function $f^\phi_{\lambda,b}$ from \eqref{f^phi} is equal to $D$ for a typical $\mathbb Z$-periodic $C^3$ function $\phi$. In particular, for given integer $b \ge 2$ and $\lambda \in (1/b, 1)$ this holds for an open and dense subset of functions $\phi$ in $C^3$ topology.

In fact, in all mentioned results we show that the measure $\mu_{\lambda,b}^{\phi}$ has local dimension $D$, where
\begin{equation}\label{def_mu}
\mu_{\lambda,b}^{\phi} = (\Id, f_{\lambda, b}^{\phi})_*\mathcal{L}|_{[0,1]}
\end{equation}
is the lift of the Lebesgue measure $\mathcal L$ on $[0,1]$ to the graph of $f^\phi_{\lambda,b}$.

\begin{definition} We say that a Borel measure $\mu$ in a metric space $X$ has local dimension $d$ at a point $x \in X$, if
\[
\lim_{r \to 0} \frac{\log\mu(B_r(x))}{\log r} = d,
\]
where $B_r(x)$ denotes the ball of radius $r$ centered at $x$. If the local dimension of $\mu$ exists and is equal to $d$ at $\mu$-almost every $x$, then we say that $\mu$ has local dimension $d$ and write $\dim \mu = d$.

If $\dim \mu = d$, then every set of positive measure $\mu$ has Hausdorff dimension at least $d$.

\end{definition}

Let $\dim_H$ and $\dim_B$ denote, respectively, the Hausdorff and box dimension (for the definition and basic properties of the Hausdorff and box dimension we refer to \cite{F1,Mat}). As mentioned above, it is well-known that $\dim_B \graph f_{\lambda,b}^{\phi} \le D$. Since $\dim_H \leq \dim_B$, to show that the Hausdorff dimension of $\graph f_{\lambda,b}^{\phi}$ is equal to $D$, it is sufficient to prove $\dim \mu_{\lambda,b}^{\phi} = D$.

\begin{definition} For $r = 3, 4, \ldots, \infty$
consider the space of $\mathbb{Z}$-periodic $C^r$ functions $\phi: \mathbb R \to \mathbb R$, treated as functions on $\mathbb S^1 = \mathbb R/\mathbb Z$. For given integer $b \geq 2$ let $\mathcal F_b \subset (1/b, 1) \times C^3(\mathbb S^1, \mathbb R)$ be the set of pairs $(\lambda, \phi)$, for which $\dim \mu_{\lambda,b}^{\phi}= D$. We denote by $\interior\mathcal F_b$ the interior of $\mathcal F_b$ with respect to the product of the Euclidean topology on $(1/b, 1)$ and $C^3$ topology on $C^3(\mathbb S^1, \mathbb R)$.
\end{definition}

In particular, for every $(\lambda, \phi)\in\mathcal{F}_b$,
\[
\dim_H \graph f_{\lambda,b}^{\phi}=\dim_B \graph f_{\lambda,b}^{\phi}=D.
\]

The first result of the paper is the following.

\begin{theoremA} For every integer $b \ge 2$ and $\lambda \in (1/b, 1)$ there exist functions $\phi_1, \ldots, \phi_m \in C^\infty(\mathbb S^1, \mathbb R)$, for some positive integer $m$, such that for every function $\phi \in C^3(\mathbb S^1, \mathbb R)$,
\[
\left(\lambda, \phi + \sum_{j=1}^m t_j \phi_j\right) \in \interior\mathcal F_b
\]
for Lebesgue almost every $(t_1, \ldots, t_m) \in \mathbb R^m$.
\end{theoremA}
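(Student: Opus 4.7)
The plan is to adapt the transversality framework of Solomyak and Peres--Schlag, used for Bernoulli convolutions and for Hunt's random-phase Weierstrass theorem, to the finite-parameter family $\phi_t:=\phi+\sum_{j=1}^m t_j\phi_j$. I would proceed in three steps: first, reduce $\dim\mu^{\phi_t}_{\lambda,b}=D$ to an $L^\infty$ density condition for auxiliary ``slice'' measures; second, verify a Peres--Schlag transversality condition for these slices in the parameter $t$; and third, conclude by the Peres--Schlag theorem.

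\textbf{Reduction.} The measure $\mu^{\phi}_{\lambda,b}$ is the self-affine measure for the contracting IFS $T_i(x,y)=((x+i)/b,\,\lambda y+\phi((x+i)/b))$, $i=0,\ldots,b-1$, with uniform weights $1/b$. For the sup-norm ball, a direct change of variable gives
\[
\frac{\mu^\phi_{\lambda,b}(B_r(x,f^\phi_{\lambda,b}(x)))}{r}=\mathcal{L}\bigl\{s\in[-1,1]:|g^\phi_{r,x}(s)|\le1\bigr\},\quad g^\phi_{r,x}(s):=\frac{f^\phi_{\lambda,b}(x+rs)-f^\phi_{\lambda,b}(x)}{r}.
\]
For $r=b^{-N}$, standard Taylor/symbolic bookkeeping yields $g^\phi_{r,x}=(b\lambda)^N G^\phi_{x,N}$ with $G^\phi_{x,N}$ of bounded oscillation, so $\dim\mu^\phi_{\lambda,b}=D$ follows from a uniform (in $N$) bound on the density of $\nu^\phi_x:=(G^\phi_{x,N})_*(\mathcal L|_{[-1,1]})$ for $\mu$-typical~$x$. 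A quantitative $L^\infty$ density bound is $C^3$-open in $\phi$, so establishing it for $\phi_t$ places $(\lambda,\phi_t)$ not merely in $\mathcal F_b$ but in $\interior\mathcal F_b$.

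\textbf{Transversality.} Let $\Sigma=\{0,\ldots,b-1\}^{\N}$, let $\pi:\Sigma\to[0,1]$ be the base-$b$ coding, and let $\sigma$ be the shift. Up to a bounded affine correction, $\nu^\phi_x$ is the pushforward of the uniform Bernoulli measure on $\Sigma$ by
\[
\Pi^\phi_x(\omega)=\sum_{k=0}^\infty\lambda^k\bigl(\phi(\pi(\sigma^k\omega))-\phi(\pi(\sigma^k\omega(x)))\bigr),
\]
where $\omega(x)$ is the digit expansion of~$x$. Choose $\phi_1,\ldots,\phi_m\in C^\infty(\mathbb{S}^1,\R)$ to be finitely many trigonometric monomials of suitably chosen frequencies, so that $\Pi^{\phi_t}_x$ is affine in $t$. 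The key analytic step is to verify the Peres--Schlag transversality condition
\[
\Bigl\|\nabla_t\bigl(\Pi^{\phi_t}_x(\omega)-\Pi^{\phi_t}_x(\tau)\bigr)\big|_{t=0}\Bigr\|\ge c\,\bigl|\Pi^\phi_x(\omega)-\Pi^\phi_x(\tau)\bigr|^\alpha
\]
for some $c>0$, $\alpha\in[0,1)$, uniformly for $\mu$-typical~$x$ and Bernoulli-typical distinct $\omega,\tau\in\Sigma$; the individual gradient components here are $\sum_{k=0}^\infty\lambda^k\bigl(\phi_j(\pi(\sigma^k\omega))-\phi_j(\pi(\sigma^k\tau))\bigr)$. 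Once this holds, the Peres--Schlag theorem yields an $L^\infty$-bounded density for $\nu^{\phi_t}_x$ at Lebesgue-almost every $t\in\R^m$, completing the proof.

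The \textbf{main obstacle} is the transversality verification: one must choose $m$ and the Fourier frequencies of $\phi_1,\ldots,\phi_m$ so as to exclude ``arithmetic'' coincidences between the base-$b$ orbit structure in $\pi\circ\sigma^k$ and the modes of the $\phi_j$, ensuring that the gradient cannot vanish off a Bernoulli-null set. I would attack this via a power-series expansion in $\lambda$ of the difference $\Pi^{\phi_t}_x(\omega)-\Pi^{\phi_t}_x(\tau)$, controlling the leading non-vanishing term via an orthogonality condition on the $\phi_j$ and a large-deviation estimate on bad symbolic pairs.
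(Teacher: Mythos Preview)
Your route diverges sharply from the paper's, and the divergence hides the main difficulty rather than resolving it. The paper does \emph{not} attack $\mu^{\phi}_{\lambda,b}$ directly. Instead it invokes the Ledrappier--Young machinery (Section~\ref{sec:back}) to reduce $\dim\mu^{\phi}_{\lambda,b}=D$ to the statement that the measure $m_{x,\gamma}=(Y_{x,\gamma})_*\mathbb P$ has dimension~$1$, where $\gamma=1/(b\lambda)$ and $Y_{x,\gamma}$ is built from $\phi'$ (not $\phi$) with contraction ratio $\gamma$ (not $\lambda$); see \eqref{eproj} and Corollary~\ref{cor_led}. This measure is exactly the conditional of Tsujii's SBR measure for the skew product $T(x,y)=(bx,\gamma y+\phi'(x))$, so Tsujii's theorem (Theorem~\ref{tsujiicor}) supplies the required $\psi_1,\dots,\psi_m$ off the shelf. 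The only work left is to pass from the $\psi_j$ to their periodic antiderivatives $\phi_j$ and observe that an additive constant in $\psi$ does not affect absolute continuity (\eqref{psi+c}). In short, the paper's proof is a two-line reduction to an existing black box.

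Your proposal bypasses Ledrappier--Young and tries to estimate $\mu^\phi_{\lambda,b}(B_r)$ by hand, then run a Peres--Schlag argument in the parameter $t$. There are two genuine gaps. First, your ``Reduction'' step is only heuristic: the claim that an $L^\infty$ density bound on $(G^{\phi}_{x,N})_*\mathcal L$, uniform in $N$, yields $\dim\mu=D$ is precisely the content of the Ledrappier--Young dimension formula in this setting, and your $\Pi^\phi_x$ (built from $\phi$ with weight $\lambda^k$) is not the object that formula singles out. Second, and more seriously, the transversality verification you label the ``main obstacle'' is the entire content of Tsujii's paper; your sketch (choose trigonometric $\phi_j$, expand in $\lambda$, invoke orthogonality and large deviations) does not come close to the combinatorial and analytic work needed, and your stated transversality inequality with $|\Pi^\phi_x(\omega)-\Pi^\phi_x(\tau)|^\alpha$ on the right is not the standard Peres--Schlag hypothesis and would need its own justification. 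As written, the proposal outsources the hard step to a plan that is not carried out, whereas the paper outsources it to a theorem that is already proved.
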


This easily implies the following corollary.

\begin{cor*}
For every integer $b \ge 2$ the set $\mathcal F_b$ contains an open and dense subset of $(1/b, 1) \times C^3(\mathbb S^1, \mathbb R)$. Moreover, for every  $\lambda \in (1/b, 1)$ and $r =3, 4, \ldots, \infty$, the set
\[
\{\phi \in C^r(\mathbb S^1, \mathbb R): (\lambda, \phi) \in \interior\mathcal F_b\}
\]
is an open and dense subset of $C^r(\mathbb S^1, \mathbb R)$.
\end{cor*}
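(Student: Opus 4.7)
Both assertions are soft consequences of Theorem~A, extracted by exploiting the key feature that Theorem~A supplies, for each fixed $\lambda \in (1/b, 1)$, a finite-dimensional family of $C^\infty$ perturbations $\phi_1, \ldots, \phi_m$ — depending only on $\lambda$, not on the base function $\phi$ — such that a full Lebesgue-measure set of coefficients $(t_1, \ldots, t_m) \in \mathbb{R}^m$ lands the pair $\bigl(\lambda, \phi + \sum_j t_j \phi_j\bigr)$ in $\interior \mathcal F_b$. In particular, since a set of full Lebesgue measure is dense in $\mathbb{R}^m$, the set of good coefficients intersects every neighbourhood of the origin, which will power all the density arguments below.

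\textbf{Density in the product.} For the first claim I would work with $\interior\mathcal F_b$ itself: it is open in $(1/b, 1) \times C^3(\mathbb S^1, \mathbb R)$ by its very definition, so only density remains to be shown. Given an arbitrary point $(\lambda_0, \phi_0)$ and an arbitrary product neighbourhood $I \times V$ of it, I fix $\lambda := \lambda_0 \in I$ and apply Theorem~A at $\lambda_0$ to produce $\phi_1, \ldots, \phi_m \in C^\infty(\mathbb S^1, \mathbb R)$. Choosing $(t_1, \ldots, t_m)$ in the full-measure good set and sufficiently close to the origin, the resulting function $\phi_0 + \sum_j t_j \phi_j$ lies in $V$, while $\bigl(\lambda_0, \phi_0 + \sum_j t_j \phi_j\bigr) \in \interior \mathcal F_b \cap (I \times V)$.

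\textbf{Fixed $\lambda$ and higher regularity.} For the second claim, I fix $\lambda \in (1/b, 1)$ and $r \in \{3, 4, \ldots, \infty\}$ and consider the slice $S_r := \{\phi \in C^r(\mathbb S^1, \mathbb R) : (\lambda, \phi) \in \interior\mathcal F_b\}$. Since the inclusion $C^r(\mathbb S^1, \mathbb R) \hookrightarrow C^3(\mathbb S^1, \mathbb R)$ is continuous (the $C^r$ topology being finer), $S_r$ is the preimage of an open slice and hence open in the $C^r$ topology. For density, I would again invoke Theorem~A: the perturbing functions $\phi_1, \ldots, \phi_m$ produced for this $\lambda$ lie in $C^\infty \subset C^r$, so for $(t_1, \ldots, t_m)$ chosen close to $0$ in the good set, the perturbation $\sum_j t_j \phi_j$ is small in every $C^r$ seminorm. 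This yields density in $C^r$ for each finite $r$ and, by the same argument applied to any Fréchet-neighbourhood basic open set, also in the case $r = \infty$.

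\textbf{Main obstacle.} There is essentially no substantive obstacle beyond Theorem~A itself — the corollary is a purely formal unpacking. The only point I would explicitly double-check is that the functions $\phi_j$ produced by Theorem~A are indeed $C^\infty$ (which is built into its statement), guaranteeing that one and the same family of perturbations delivers density simultaneously in every $C^r$ topology for $r \ge 3$.
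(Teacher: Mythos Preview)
Your proposal is correct and is precisely the natural unpacking the paper has in mind: the paper itself gives no proof beyond the remark ``this easily implies the following corollary,'' and your argument --- take $\interior\mathcal F_b$ as the open dense set, and for density perturb along the finite $C^\infty$ family supplied by Theorem~A with coefficients chosen in the full-measure good set near the origin --- is exactly the intended one. The observations you single out (that the $\phi_j$ depend only on $\lambda$ and lie in $C^\infty$, and that $C^r\hookrightarrow C^3$ is continuous) are the only ingredients needed.
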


Now we state the results about the graph of the classical Weierstrass function $W_{\lambda, b}$, for an integer $b \ge 2$, $\lambda \in (1/b, 1)$ and $\phi(x)=\cos(2\pi x)$. For simplicity, we will write $\mu_{\lambda,b}=\mu_{\lambda,b}^{\phi}$ in this case.

\begin{theoremB} For every integer $b \ge 2$,
\[
\dim \mu_{\lambda,b}= 2+\frac{\log\lambda}{\log b}
\]
for every $\lambda \in (\lambda_b, 1)$, where $\lambda_b$ is equal to the unique zero of the function
\[
h_b(\lambda) =
\begin{cases}
\frac{1}{4\lambda^2(2\lambda-1)^2} +
\frac{1}{16\lambda^2(4\lambda-1)^2} - \frac{1}{8\lambda^2} + \frac{\sqrt{2}}{2\lambda} - 1 & \text{for } b = 2\\
\frac{1}{(b\lambda - 1)^2} + \frac{1}{(b^2\lambda - 1)^2} - \sin^2\left(\frac{\pi}{b}\right)  & \text{for } b \geq 3
\end{cases}
\]
on the interval $(1/b, 1)$. In particular,
\[
\dim_H \graph W_{\lambda,b}=\dim_B \graph W_{\lambda,b}=D
\]
for every $\lambda \in (\lambda_b, 1)$.
\end{theoremB}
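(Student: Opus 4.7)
The plan is to prove the lower bound $\dim\mu_{\lambda,b}\ge D$; combined with the classical upper estimate $\dim_H\graph W_{\lambda,b}\le\dim_B\graph W_{\lambda,b}\le D$, this yields all three stated equalities. I would work with the skew product $\Phi$ from \eqref{Phi}, for which the graph of $W_{\lambda,b}$ is the global repeller with Lyapunov exponents $\log b$ and $\log\lambda$. Since $\mu_{\lambda,b}$ projects to Lebesgue on $\mathbb S^1$, its entropy is $\log b$ and the unstable dimension is already $1$; by a Ledrappier--Young type argument, presumably the same reduction that underlies Theorem~A, proving $\dim\mu_{\lambda,b}=D$ is equivalent to showing that the conditional (transverse) measures on local stable vertical leaves have local dimension $1+\log\lambda/\log b$. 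This reduction would be carried out first, turning the problem into a quantitative small-ball estimate for the $y$-increment.

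The quantitative core is a second moment bound for
\[
\Delta_n(x)=W_{\lambda,b}(x)-W_{\lambda,b}(x_0), \qquad |x-x_0|\le b^{-n},
\]
split as $\Delta_n(x)=[W_n(x)-W_n(x_0)]+\lambda^n[W_{\lambda,b}(b^n x)-W_{\lambda,b}(b^n x_0)]$ where $W_n$ is the truncation to harmonics below $n$. Taylor expansion of $W_n$ at $x_0$ together with the self-similar form of the tail decomposes the variance of $\Delta_n$ (normalised by $\lambda^{-n}$) into a \emph{signal} and a \emph{noise}. The signal encodes the separation between the $b$ digits $i\in\{0,\dots,b-1\}$ that may be adjoined at the next symbolic level: for $b\ge 3$ it equals the averaged squared spread $\sin^2(\pi/b)$ of $\cos(2\pi(x+i)/b)$. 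The noise splits into a first-order Taylor piece, controlled by the geometric sum $\sum_{k\ge 1}(b\lambda)^{-k}=1/(b\lambda-1)$ coming from $\lambda^{-n}\sum_k\lambda^k b^k h$ with $h=b^{-n}$, and a second-order Taylor piece, controlled by $\sum_{k\ge 1}(b^2\lambda)^{-k}=1/(b^2\lambda-1)$ coming from $\lambda^{-n}\sum_k\lambda^k b^{2k}h^2$. Squaring produces exactly the two positive terms of $h_b(\lambda)$ for $b\ge 3$; the hypothesis $h_b(\lambda)<0$ is therefore the sharp ``signal dominates noise'' inequality that closes the variance estimate and, when iterated along scales, yields the sought local-dimension lower bound.

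The case $b=2$ requires more care because only two first-level cylinders are available and no uniform averaging produces a clean $\sin^2(\pi/b)$. Instead I would compute the separation directly as the squared distance between the two cylinders, writing out $|W_{\lambda,2}(x)-W_{\lambda,2}(x+1/2)|^2$ through the first few harmonics by evaluating $\cos$ at $0,\pi/4,\pi/2$; this yields the additional terms $\sqrt2/(2\lambda)$, $-1/(8\lambda^2)$, and $-1$ in $h_2(\lambda)$. The noise terms are still the first two Taylor-remainder contributions, now evaluated with the correct factors of $2$ and $4$ to give $1/(4\lambda^2(2\lambda-1)^2)$ and $1/(16\lambda^2(4\lambda-1)^2)$. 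Once $h_2(\lambda)<0$ is checked, the argument proceeds exactly as for $b\ge 3$.

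The main obstacle, in my view, is extracting the \emph{sharp} constants. Because $h_b(\lambda_b)=0$ leaves no multiplicative slack, the variance bound must capture every cross term without loss; in particular one must (a) justify that the dominant noise is carried by exactly the first two Taylor orders and not by further harmonics, (b) control the divergent geometric series $\sum(\lambda b)^k$ and $\sum(\lambda b^2)^k$ through the correct centring so that after rescaling by $\lambda^{-n}$ one gets the convergent sums $1/(b\lambda-1)$ and $1/(b^2\lambda-1)$, and (c) upgrade the estimate from an averaged statement to a pointwise local-dimension bound at $\mu_{\lambda,b}$-almost every point, which is the step ultimately responsible for the equality $\dim\mu_{\lambda,b}=D$ rather than just a lower bound on $\dim_H\graph W_{\lambda,b}$.
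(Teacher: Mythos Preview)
Your reduction to a transverse-measure problem is the right first move, but the target is mis-stated: via Ledrappier--Young one has $\dim\mu_{\lambda,b}=1+(D-1)\dim\nu_{x,\ii}$, so what has to be shown is $\dim\nu_{x,\ii}=1$, and by Ledrappier's projection theorem this reduces further to $\dim m_{x,\gamma}=1$ for a.e.\ $x$, where $m_{x,\gamma}$ is the distribution of the strong-stable slope $Y_{x,\gamma}(\ii)$ under the Bernoulli measure on pasts. The relevant leaves are the strong stable curves, not the vertical ones; the vertical line only carries the transverse measure whose dimension must equal $1$, not $D-1$.

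The more serious gap is the mechanism you propose for the quantitative step. The paper does \emph{not} run a variance bound on $W(x)-W(x_0)$ with a first/second order Taylor split. Instead it invokes Tsujii's criterion: the SBR measure of the auxiliary endomorphism $T(x,y)=(bx,\gamma y-2\pi\sin 2\pi x)$, $\gamma=1/(b\lambda)$, is absolutely continuous (hence $m_{x,\gamma}\ll\mathcal L$ for a.e.\ $x$) provided one has the transversality
\[
|Y_{x,\gamma}(\ii)-Y_{x,\gamma}(\jj)|>\delta\quad\text{or}\quad\Big|\tfrac{d}{dx}Y_{x,\gamma}(\ii)-\tfrac{d}{dx}Y_{x,\gamma}(\jj)\Big|>\delta\qquad(i_1\neq j_1).
\]
For $b\ge3$ the first term of $Y$ contributes $2\sin(\pi/b)\,|\cos(\cdot)|$ and the first term of $\frac{d}{dx}Y$ contributes $2\sin(\pi/b)\,|\sin(\cdot)|$; the tails contribute $\gamma/(1-\gamma)=1/(b\lambda-1)$ and $\gamma/(b-\gamma)=1/(b^2\lambda-1)$ respectively. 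Squaring and adding, the identity $\cos^2+\sin^2=1$ produces $\sin^2(\pi/b)$ on the left and exactly $h_b(\lambda)$ on the right. So the two error terms are \emph{not} first and second Taylor remainders of $W_n$; they are the tails of $Y$ and of $\partial_x Y$, and the $\sin^2(\pi/b)$ is not an averaged spread but the outcome of combining a value estimate with a derivative estimate. Your ``signal vs.\ noise'' second-moment bound on $\Delta_n$ would give oscillation information of box-dimension type, but there is no clear route from it to $\dim m_{x,\gamma}=1$; the value-\emph{or}-derivative dichotomy is what makes the argument close with no slack at $\lambda_b$.

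For $b=2$ the paper does something different again: it keeps two terms of $Y$ (not of $W$), performs a case analysis over $(i_2,j_2)\in\{0,1\}^2$, and maximises an explicit one-variable function $g(t)$ in each case; the worst case $(i_2,j_2)=(0,1)$ gives the expression $h_2(\lambda)$. Your description via $|W(x)-W(x+1/2)|^2$ and evaluation of $\cos$ at $0,\pi/4,\pi/2$ does not correspond to this computation.
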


Using the Pollicott--Simon--Peres--Solomyak transversality methods, we can extend the result for almost every $\lambda$ on a larger interval. To state the next theorem, we need to recall some definitions related to so-called \mbox{$(*)$-}functions considered in the study of infinite Bernoulli convolutions (see e.g.~\cite{peso, peso2, So}). For $\beta \geq 1$ let
$$\mathcal{G}_\beta=\left\{ g(t)=1+\sum_{n=1}^{\infty}g_nt^n, \: g_n\in[-\beta,\beta] \text{ for } n \geq 1\right\}.$$
Let $y(\beta)$ be the smallest possible value of positive double roots of functions in $\mathcal{G}_\beta$, i.e.
\[
y(\beta)=\inf\left\{t>0: \text{there exists }g \in\mathcal{G}_\beta\text{ such that }g(t)=g'(t)=0\right\}.
\]

\begin{theoremC}
For every integer $b \ge 2$,
\[
\dim \mu_{\lambda,b}= D
\]
for Lebesgue almost every $\lambda \in (\tilde\lambda_b, 1)$, where $\tilde \lambda_b$ is equal to the unique root of the equation
\[
y\left(\frac{1}{\sqrt{\sin^2(\pi/b) - 1/(b^2\lambda - 1)^2}}\right) = \frac{1}{b\lambda}
\]
on the interval $(1/b, 1)$. In particular,
\[
\dim_H \graph W_{\lambda,b}=\dim_B \graph W_{\lambda,b}=D
\]
for Lebesgue almost every $\lambda \in (\tilde\lambda_b, 1)$.
\end{theoremC}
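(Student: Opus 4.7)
The plan is to follow the Pollicott--Simon--Peres--Solomyak transversality methodology, with the $(*)$-function class $\mathcal{G}_\beta$ introduced just before the statement as the central tool. The objective is to prove that the $s$-energy
\[
E_s(\mu_{\lambda, b}) = \iint \frac{d\mu_{\lambda,b}(p)\,d\mu_{\lambda,b}(q)}{|p - q|^s}
\]
is finite for every $s < D$ and Lebesgue-almost every $\lambda$ in each compact subinterval $I \subset (\tilde\lambda_b, 1)$. Finiteness of $E_s$ forces the lower local dimension of $\mu_{\lambda,b}$ to be at least $s$; combined with the universal upper bound $\dim_B \graph W_{\lambda, b} \leq D$ this will give $\dim \mu_{\lambda,b} = D$ and hence Theorem~C.

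By Fubini applied to the identity $\mu_{\lambda,b} = (\Id, W_{\lambda, b})_* \mathcal{L}|_{[0,1]}$, finiteness of $\int_I E_s\, d\lambda$ reduces to bounding
\[
\iint_{[0,1]^2} \int_I \frac{d\lambda}{\bigl((x - x')^2 + \Delta_{x, x'}(\lambda)^2\bigr)^{s/2}}\, dx\, dx',
\]
where $\Delta_{x, x'}(\lambda) = W_{\lambda, b}(x) - W_{\lambda, b}(x')$. Controlling the inner integral asks for a transversality estimate of the form $\mathcal{L}\{\lambda \in I : |\Delta_{x, x'}(\lambda)| \leq r\} \lesssim r$, uniform in $(x, x')$ after an appropriate normalization of $\Delta_{x,x'}$.

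The heart of the argument, and the main obstacle, is producing the coefficient bound that places the normalized series into $\mathcal{G}_\beta$. Writing $|x - x'| \asymp b^{-k}$, splitting $\Delta_{x, x'}$ at index $k$, dividing by the leading nonzero coefficient and substituting $t = 1/(b\lambda)$ converts the expansion into $g(t) = 1 + \sum_{n \geq 1} g_n t^n$. A Fourier computation based on the identity $\cos\alpha - \cos\beta = -2\sin\frac{\alpha+\beta}{2}\sin\frac{\alpha-\beta}{2}$, combined with a careful $\ell^2$-type estimate of the contribution of the terms past the leading block, must yield $|g_n| \leq \beta$ with
\[
\beta = \frac{1}{\sqrt{\sin^2(\pi/b) - 1/(b^2\lambda-1)^2}}.
\]
Here $\sin^2(\pi/b)$ records a uniform lower bound on the squared normalized leading coefficient (arising as the minimum of $|\sin(\pi j/b)|^2$ over nonzero residues $j$ mod $b$), while the subtracted term $1/(b^2\lambda-1)^2$ absorbs the geometric tail. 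Making both constants sharp is delicate and closely parallels the derivation of the function $h_b$ appearing in Theorem~B.

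Once $g \in \mathcal{G}_\beta$ and $t_0 = 1/(b\lambda) < y(\beta)$, which is precisely the hypothesis $\lambda \in (\tilde\lambda_b, 1)$ by definition of $\tilde\lambda_b$, a standard compactness argument in $\mathcal{G}_\beta$ yields a uniform lower bound $|g(t_0)| + |g'(t_0)| \geq c > 0$ on any compact sub-interval of $(0, y(\beta))$. The transversality calculus of~\cite{peso, peso2, So} then converts this into the sub-level inequality $\mathcal{L}\{\lambda \in I : |\Delta_{x,x'}(\lambda)| \leq r\} \lesssim r / |A_k(x, x')|$, where $A_k$ is the (unnormalized) leading coefficient. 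Inserting this bound into the double $x$-integral, using the lower bound on $|A_k|$ and balancing against the scale $|x-x'| \asymp b^{-k}$ while summing over $k$, produces $\int_I E_s\, d\lambda < \infty$ for every $s < D$, completing the proof via Fubini.
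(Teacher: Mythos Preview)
Your approach has a genuine gap at the step where you claim the normalized power series lands in $\mathcal G_\beta$ with the specific $\beta = \bigl(\sin^2(\pi/b) - 1/(b^2\lambda-1)^2\bigr)^{-1/2}$. The quantity $\sin^2(\pi/b)$ does \emph{not} arise as a lower bound on the leading coefficient of the series $\Delta_{x,x'}(\lambda) = \sum_{n\ge 0}\lambda^n\bigl(\cos(2\pi b^n x)-\cos(2\pi b^n x')\bigr)$. Writing $a_n = -2\sin\bigl(\pi b^n(x+x')\bigr)\sin\bigl(\pi b^n(x-x')\bigr)$ and taking $|x-x'|\asymp b^{-k}$, the factor $\sin\bigl(\pi b^k(x+x')\bigr)$ carries no uniform lower bound whatsoever: it depends on the midpoint and can be arbitrarily small on a set of $(x,x')$ of positive measure. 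Hence $|a_{k+m}/a_k|$ is not bounded by any fixed $\beta$, and the $(*)$-function machinery cannot be invoked in the form you propose. Your parenthetical ``arising as the minimum of $|\sin(\pi j/b)|^2$ over nonzero residues $j$ mod $b$'' imports a discrete structure that simply is not present in the continuous pair $(x,x')$.

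The paper's route is essentially different and explains both where that discrete bound comes from and why a second variable is needed. It first invokes the Ledrappier--Young machinery (Corollary~\ref{cor_led}) to reduce the problem to showing that, for a.e.\ $(x,\gamma)$, the push-forward measure $m_{x,\gamma}=(Y_{x,\gamma})_*\mathbb P$ on $\mathbb R$ is absolutely continuous. Here $Y_{x,\gamma}(\ii)$ is a power series in $\gamma$ whose first coefficient is $\sin\bigl(2\pi(x+i_1)/b\bigr)-\sin\bigl(2\pi(x+j_1)/b\bigr)$ with $i_1\neq j_1$ \emph{integers} in $\{0,\dots,b-1\}$; this is the source of the $\sin(\pi/b)$ factor. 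Even then the remaining factor $\cos\bigl(\pi(2x+i_1+j_1)/b\bigr)$ can vanish, and the paper handles this by proving transversality \emph{jointly} in $(x,\gamma)$ (Proposition~\ref{ltrans}): the $x$-derivative produces a complementary $\sin$ factor, and squaring and adding yields the lower bound you quote. The resulting two-dimensional sublevel estimate (Lemma~\ref{ltransleb}) is then fed into a density argument for $m_{x,\gamma}$ over $(x,\gamma)\in(0,1)\times(1/b,\tilde\gamma_b)$, and Fubini in $\gamma$ (equivalently in $\lambda$) gives the a.e.\ statement. A direct one-parameter energy estimate on $\mu_{\lambda,b}$, with transversality in $\lambda$ alone, would need a uniform-in-$(x,x')$ bound that is not available.
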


Estimating the numbers $\lambda_b$ and $\tilde\lambda_b$ in the above theorems, we obtain the following.
\begin{corollaryD}
\begin{align*}
\dim_H \graph W_{\lambda,2} &= 2+\frac{\log\lambda}{\log 2} & &\text{for every } \lambda \in (0.9352, 1) \text{ and almost every } \lambda \in (0.81, 1),\\
\dim_H \graph W_{\lambda,3} &= 2+\frac{\log\lambda}{\log 3} & &\text{for every } \lambda \in (0.7269, 1) \text{ and almost every } \lambda \in (0.55, 1),\\
\dim_H \graph W_{\lambda,4} &= 2+\frac{\log\lambda}{\log 4} & &\text{for every } \lambda \in (0.6083, 1) \text{ and almost every } \lambda \in (0.44, 1).
\end{align*}
For every $b \geq 5$,
\begin{align*}
\dim_H \graph W_{\lambda,b} &= 2+\frac{\log\lambda}{\log b} & \text{ for every } \lambda \in (0.5448, 1) \text{ and almost every } \lambda \in (1.04/\sqrt{b}, 1).
\end{align*}
\end{corollaryD}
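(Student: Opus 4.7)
Both columns of inequalities reduce, via Theorems B and C, to numerical estimation of the explicit thresholds $\lambda_b$ and $\tilde\lambda_b$. I would treat the two columns separately, relying on monotonicity arguments to consolidate cases.

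For the bounds on $\lambda_b$ coming from Theorem B, I would first observe that $h_b(\lambda) \to +\infty$ as $\lambda \searrow 1/b$ and $h_b(1) < 0$: the latter is a short computation for $b = 2$ and reduces to $\sin^2(\pi/b) > 1/(b-1)^2 + 1/(b^2-1)^2$ for $b \geq 3$, which is elementary. Since Theorem B asserts that $h_b$ has a unique zero in $(1/b, 1)$, the function $h_b$ is negative precisely on $(\lambda_b, 1)$, so the claim $\lambda_b < \lambda^*$ is equivalent to $h_b(\lambda^*) < 0$. For $b = 2, 3, 4$ I would substitute $\lambda^* = 0.9352, 0.7269, 0.6083$ into the closed-form formula for $h_b$ and verify the sign by direct arithmetic. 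For the uniform bound $\lambda_b < 0.5448$ valid for $b \geq 5$, I would combine direct evaluation at a few small $b$ with the estimate $\sin^2(\pi/b) \geq (2/b)^2$ (from $\sin x \geq 2x/\pi$ on $[0, \pi/2]$), checking that the first two terms of $h_b(0.5448)$ are of order $1/b^2$ with a strictly smaller constant than the sine term for every $b \geq 5$.

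For the bounds on $\tilde\lambda_b$ coming from Theorem C, the key input is the elementary lower bound
\[
y(\beta) \geq \frac{1}{1 + \sqrt{\beta}}.
\]
To prove this, if $g \in \mathcal G_\beta$ satisfies $g(t) = g'(t) = 0$, then $g(t) - tg'(t) = 0$ yields $\sum_{n \geq 2}(n - 1)g_n t^n = 1$; combining $|g_n| \leq \beta$ with the closed-form $\sum_{n \geq 2}(n-1)t^n = t^2/(1-t)^2$ gives $(1-t)^2 \leq \beta t^2$, i.e.\ $t \geq 1/(1 + \sqrt\beta)$. Setting $\beta(\lambda) = 1/\sqrt{\sin^2(\pi/b) - 1/(b^2\lambda - 1)^2}$ and noting that both $1/(b\lambda)$ and $\beta(\lambda)$ are decreasing in $\lambda$, while $1/(1 + \sqrt{\beta(\lambda)})$ is therefore increasing, the sufficient condition $b\lambda \geq 1 + \sqrt{\beta(\lambda)}$ provides an upper bound on $\tilde\lambda_b$. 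I would verify this condition at $\lambda^* = 1.04/\sqrt b$ for every $b \geq 5$: the critical case $b = 5$ gives a margin of about $0.002$, and for larger $b$ the asymptotics $\sqrt{\beta(\lambda^*)} \sim \sqrt{b/\pi}$ together with $\sqrt\pi > 1/1.04$ make the margin grow. For $b = 2, 3, 4$ the crude bound is insufficient (for instance, at $b = 2,\, \lambda = 0.81$ it gives $1/(1 + \sqrt\beta) \approx 0.486$ versus $1/(b\lambda) \approx 0.617$), so I would invoke sharper, computer-certified estimates on $y(\beta)$ at the specific values of $\beta$ arising here, as supplied by the $(*)$-function machinery of Solomyak and Peres--Solomyak, which furnish explicit polynomial witnesses excluding double roots in the required range.

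The main obstacle is precisely this last small-$b$ step: the elementary bound on $y(\beta)$ is not tight for $\beta \lesssim 2$, and the refined bounds from the transversality literature, while available, require some work to instantiate at the concrete numerical values of $\beta(\lambda^*)$ for $b = 2, 3, 4$.
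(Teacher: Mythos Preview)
Your approach is essentially the paper's: reduce Corollary~D to checking $h_b(\lambda^*)<0$ for the ``every $\lambda$'' column and to the sufficient condition $b\lambda^* > 1+\sqrt{\beta(\lambda^*)}$ (from $y(\beta)\ge 1/(1+\sqrt\beta)$) for the ``almost every'' column when $b\ge 5$, with the $(*)$-function machinery handling $b=2,3,4$.

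Two small points of comparison. First, your Jordan-type bound $\sin(\pi/b)\ge 2/b$ is substantially weaker than the paper's $\sin x > x - x^3/6$, which yields $\sin^2(\pi/b) > \pi^2/b^2 - \pi^4/(3b^4)$. With your bound the comparison of constants (roughly $1/0.5448^2\approx 3.37$ versus $4$) only becomes effective once $1/(0.5448 - 1/b)^2 < 4$, i.e.\ around $b\gtrsim 25$, so ``a few small $b$'' would in fact be some twenty direct evaluations; the paper's sharper inequality lets one check the single case $b=5$ via the auxiliary function $H_b(\lambda)=1/(\lambda-1/b)^2+1/(b\lambda-1/b)^2+\pi^4/(3b^2)-\pi^2$, which is decreasing in $b$. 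Second, your self-contained derivation of $y(\beta)\ge 1/(1+\sqrt\beta)$ via $g(t)-tg'(t)$ is a pleasant addition (the paper simply cites it from Peres--Solomyak). For $b=2,3,4$ the paper does exactly what you anticipate: it exhibits explicit $(*)$-functions $g_\beta(t)=1-\beta\sum_{1}^{k-1}t^n+\eta t^k+\beta\sum_{k+1}^\infty t^n$ with concrete $(k,\eta)$ and verifies $g_\beta(t_\beta)>0$, $g_\beta'(t_\beta)<0$ at the relevant $t_\beta=1/(b\lambda^*)$, so the ``computer-certified'' step is made fully explicit there.
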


Obviously, using Theorems~B and~C, one can get better estimates for $b \geq 5$. In fact,
\[
\lambda_b \to \frac 1 \pi \quad \text{and} \quad \tilde\lambda_b\sqrt{b} \to \frac{1}{\sqrt{\pi}} \qquad \text{as } b \to \infty,
\]
see Lemmas~\ref{h} and~\ref{tilde}.

In the proofs we use the techniques of ergodic theory of non-uniformly hyperbolic smooth dynamical systems on manifolds (Pesin theory) developed by Ledrappier and Young in \cite{LY} and applied by Ledrappier in \cite{Led} to study the graphs of the Weierstrass-type functions. Theorems~A and~B rely on the results proved by Tsujii in \cite{T} about the SBR measure for some smooth Anosov endomorphisms on the cylinder $\mathbb S^1 \times \mathbb R$. The proof of Theorem~C uses the Peres--Solomyak transversality techniques developed in \cite{peso,peso2}.

\section{Ledrappier--Young theory and graphs of functions} \label{sec:back}
We consider $\graph f_{\lambda,b}^{\phi}$ as an invariant repeller of the dynamical system \eqref{Phi}. The Ledra\-ppier--Young theory in \cite{LY} is valid for smooth diffeomorphisms, so to apply it for $\Phi$ one considers the inverse limit (alternatively, it is possible to use analogous theory for smooth endomorphisms developed by Qian, Xie and Zhu in \cite{Qian}).

For the reader's convenience, let us recall the Ledrappier--Young results from \cite{Led, LY} applied for the graph of $f_{\lambda,b}^{\phi}$. (Note that the quoted results are formulated in \cite{Led} for $b=2$. However, the theory is valid for any integer $b \ge 2$.)

Take an integer $b \ge 2$, $\lambda \in (1/b, 1)$ and a non-constant continuous $\mathbb Z$-periodic piecewise $C^1$ function $\phi: \mathbb R \to \mathbb R$. Consider the symbolic space
\[
\Sigma=\left\{0,\ldots, b-1\right\}^{\mathbb Z^+}
\]
and the Bernoulli measure $\mathbb{P}$ on $\Sigma$ with uniform probabilities $\left\{\frac{1}{b},\ldots, \frac{1}{b}\right\}^{\mathbb Z^+}$. Define the inverse of the map $\Phi$ from \eqref{Phi} as the map $F: [0,1] \times \R \times \Sigma \to [0,1] \times \R \times \Sigma$,
\[
F(x,y, \ii) = \left(\frac{x}{b} + \frac{i_1}{b},\; \lambda y +\phi\left(\frac{x}{b} + \frac{i_1}{b}\right), \;\sigma(\ii)\right),
\]
where $\ii = (i_1, i_2, \ldots)$ and $\sigma$ is the left-side shift on $\Sigma$. We have
\[
F(\graph f_{\lambda,b}^{\phi} \times \Sigma) = \graph f_{\lambda,b}^{\phi} \times \Sigma, \qquad F_*(\mu_{\lambda,b}^{\phi} \times \mathbb P) = \mu_{\lambda,b}^{\phi} \times \mathbb P.
\]
Defining
\[
F_i(x,y) = \left(\frac{x}{b} + \frac{i}{b},\; \lambda y +\phi\left(\frac{x}{b} + \frac{i}{b}\right)\right)
\]
for $i \in \{0, \ldots, b-1\}$, we have
\[
DF_i (x,y) =
\begin{bmatrix}
1/b & 0\\
\phi'(x/b + i/b)/b & \lambda
\end{bmatrix}.
\]
Consider products of these matrices, which arise by composing the maps $F_{i_1}, F_{i_2}, \ldots$ for given $\ii = (i_1, i_2, \ldots)$. By a direct check, the Lyapunov exponents of the system are equal to $-\log b < \log \lambda < 0$, the foliation of $(0, 1) \times \mathbb R$ into vertical lines is invariant and $F_i$ contract its fibers affinely with exponent $\log \lambda$, and there is exactly one strong stable direction in $\R^2$ (corresponding to the exponent $-\log b$), given by
\[
\mathcal J_{x,\ii} =
\begin{bmatrix}
1\\
Y_{x,\gamma}(\ii)
\end{bmatrix},
\]
where
\begin{equation}\label{gamma}
\gamma=\frac{1}{b\lambda}
\end{equation}
and
\begin{equation}
\label{eproj}
Y_{x,\gamma}(\ii)=-\sum_{n=1}^{\infty}\gamma^n\phi'\left(\frac{x}{b^n}+\frac{i_1}{b^n}+\cdots+\frac{i_n}{b}\right)
\end{equation}
for $\ii = (i_1, i_2, \ldots)$. (The formula \eqref{gamma}, relating $\gamma$ to $\lambda$, will be used throughout the paper.) In fact,
\[
D F_{i_1}(x,y)(\mathcal J_{x,\ii}) = \frac{1}{b}\mathcal J_{x/b + i_1/b,\:\sigma(\ii)}.
\]
Note that $\mathcal J_{x,\ii}$ does not depend on $y$. For given $\ii \in \Sigma$, the vector field $\mathcal J_{x,\ii}$ defines locally a $C^{1+\varepsilon}$ foliation of $(0,1) \times \R$ into strong stable manifolds $\Gamma_{x,y,\ii}= \Gamma_{y,\mathbf{i}}(x)$, which are parallel $C^{1+\varepsilon}$ curves (graphs of functions of the first coordinate).

For the measure $\mu = \mu_{\lambda, b}^{\phi}$ defined in \eqref{def_mu}, there exists a system of conditional measures $\mu_{x,y,\ii}$ on $\Gamma_{x,y,\ii}$, associated to the foliation $\{\Gamma_{x,y,\ii}\}$ treated as a measurable partition. Take a vertical line $\ell$ and let $\nu_{x,\ii}$ (called transversal measure) be the projection of $\mu_{\lambda, b}$ to $\ell$ along the curves $\Gamma_{x,y,\ii}$, $y \in \R$. The following result is a part of the Ledrappier--Young theory from \cite{LY} (see also \cite[Proposition~2]{Led}).

\begin{theorem}[Ledrappier--Young]
The local dimension of the measure $\mu$ exists and is constant $\mu$-almost everywhere. The local dimension of the measure $\mu_{x,y,\ii}$ exists, is constant $\mu_{x,y,\ii}$-almost everywhere, and is constant for $(\mu \times \mathbb P)$-almost every $(x,y,\ii)$. The local dimension of the measure $\nu_{x,\ii}$ exists, is constant $\nu_{x,\ii}$-almost everywhere, and is constant for $(\mathcal L \times \mathbb P)$-almost every $(x, \ii)$, where $\mathcal L$ is the Lebesgue measure. Moreover,
\[
\dim \mu = \dim\mu_{x,y,\ii} + \dim\nu_{x,\ii}
\]
and
\[
\log b\dim\mu_{x,y,\ii}  - \log \lambda\dim\nu_{x,\ii}  = \log b.
\]
\end{theorem}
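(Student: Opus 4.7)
The plan is to verify the hypotheses of the Ledrappier--Young theorems of \cite{LY} for the invertible system $(F,\mu\times\mathbb P)$ and then read off the two conclusions (alternatively, one could quote the endomorphism version of Qian--Xie--Zhu \cite{Qian} directly for $\Phi$). The state space $[0,1]\times\R\times\Sigma$ is a manifold times a Cantor set, but this is the standard natural-extension setup for the $b$-to-one endomorphism $\Phi$, and the constructions of stable manifolds, conditional measures and entropy in \cite{LY} pass through with the shift factor contributing no dimension but supplying the symbolic invertibility.

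For the hypotheses: $\mu\times\mathbb P$ is $F$-invariant (recorded in the excerpt) and ergodic, since $(\Phi,\mu)$ is measurably isomorphic to the one-sided Bernoulli $b$-shift via $(x,y)\mapsto x$ (which pushes $\mu$ to $\mathcal L|_{[0,1]}$ and intertwines $\Phi$ with $x\mapsto bx\pmod 1$), while $\Sigma$ simply records the past of this shift. The lower triangular matrix $DF_i$ has diagonal $(1/b,\lambda)$, so the Lyapunov exponents of $F$ in the smooth directions are the two distinct negative numbers $-\log b<\log\lambda<0$. The vertical direction spans the weak stable subspace; the direction $\mathcal J_{x,\ii}$ of \eqref{eproj} spans the strong stable one, its defining series converging geometrically because $\gamma=1/(b\lambda)<1$. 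For $\phi$ piecewise $C^1$ with $\phi'$ piecewise H\"older, the field $\mathcal J_{x,\ii}$ is $C^{\varepsilon}$ in $x$ uniformly in $\ii$ and integrates to the strong stable foliation whose leaves $\Gamma_{x,y,\ii}$ are $C^{1+\varepsilon}$ graphs.

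The first identity is then the Ledrappier--Young dimension decomposition: the local dimensions of $\mu$, of the conditional measures $\mu_{x,y,\ii}$ along the strong stable leaves, and of the transversal factor $\nu_{x,\ii}$ on a vertical line (obtained by holonomy along the $\Gamma_{x,y,\ii}$) all exist, are constant almost everywhere, and satisfy $\dim\mu=\dim\mu_{x,y,\ii}+\dim\nu_{x,\ii}$; this is exactly the content of the main theorem of \cite{LY} in the form recorded in \cite[Prop.~2]{Led}.

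The second identity comes from the Ledrappier--Young entropy formula applied to $F^{-1}$, whose forward-time Lyapunov exponents are the positive numbers $\log b>-\log\lambda>0$, with corresponding unstable filtration consisting of the strong unstable leaves $W^1=\Gamma$ (exponent $\log b$) and the full two-dimensional invariant piece $W^2$ (exponent $-\log\lambda$). The formula yields
\[
h_{\mu\times\mathbb P}(F^{-1})=\log b\cdot\dim\mu_{x,y,\ii}+(-\log\lambda)(\dim\mu-\dim\mu_{x,y,\ii})=\log b\cdot\dim\mu_{x,y,\ii}-\log\lambda\cdot\dim\nu_{x,\ii}.
\]
Since the natural extension preserves entropy and $(\Phi,\mu)$ is isomorphic to the Bernoulli $b$-shift, $h_{\mu\times\mathbb P}(F^{-1})=h_\mu(\Phi)=\log b$, which rearranges to the displayed identity. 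The main delicate point is invoking \cite{LY} cleanly in this Cantor-bundle setting and verifying the $C^{1+\varepsilon}$ regularity of the strong stable foliation; both are by now standard for systems with distinct constant Lyapunov exponents, but they do require the smoothness assumptions on $\phi$ to give $\mathcal J_{x,\ii}$ enough regularity in $x$.
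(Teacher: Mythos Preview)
The paper does not prove this statement at all: it is presented as a quotation of results from \cite{LY} and \cite[Proposition~2]{Led}, with the remark that although \cite{Led} treats $b=2$, the argument works for any integer $b\ge 2$. Your proposal is a correct outline of exactly what those references establish in this setting---verifying ergodicity and the constant Lyapunov spectrum for the natural extension $(F,\mu\times\mathbb P)$, identifying the strong stable foliation $\Gamma_{x,y,\ii}$ via the field $\mathcal J_{x,\ii}$, invoking the Ledrappier--Young dimension decomposition for the first identity, and obtaining the second from the entropy formula together with $h_\mu(\Phi)=\log b$. So your sketch is consistent with, and more detailed than, what the paper itself offers; the only caveat you rightly flag is that applying \cite{LY} in the Cantor-bundle/noncompact setting requires either the natural-extension bookkeeping you describe or a direct appeal to the endomorphism theory of \cite{Qian}.
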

The latter is a ``conditional'' version of the Pesin entropy formula. As a corollary, one gets
\begin{equation}\label{dim}
\dim \mu = 1 + \left(1 + \frac{\log \lambda}{\log b}\right) \dim\nu_{x,\ii} = 1 + (D-1)\dim\nu_{x,\ii}.
\end{equation}

In \cite{Led}, Ledrappier proved a kind of the Marstrand-type projection theorem, showing that if the distribution of angles of directions $\mathcal J_{x,\ii}$ has dimension $1$, then the dimension of the transversal measure is also equal to $1$. More precisely, for the measure
\begin{equation}\label{def_m}
m_{x,\gamma}=\left(Y_{x,\gamma}\right)_*\mathbb{P},
\end{equation}
he proved the following.

\begin{theorem}[Ledrappier, \cite{Led}]\label{pled}
Let $\gamma\in(1/b,1)$. If $\dim m_{x,\gamma} = 1$ for Lebesgue almost every $x\in(0,1)$, then $\dim\nu_{x,\ii}=1$.
\end{theorem}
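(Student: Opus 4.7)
The plan is to prove a Marstrand-type projection theorem via the Frostman energy criterion, treating the slope family $\ii\mapsto Y_{x,\gamma}(\ii)$ as a random family of projection directions of dimension one.

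First I make $\nu_{x,\ii}$ concrete. Since $\mathcal J_{x,\ii}$ is independent of~$y$, for each fixed $\ii\in\Sigma$ the leaves $\Gamma_{y,\ii}$, $y\in\R$, are parallel vertical translates of the single $C^{1+\varepsilon}$ graph $G_\ii(x):=\int_0^x Y_{s,\gamma}(\ii)\,ds$. Projecting $\mu=(\Id,f)_*\mathcal L|_{[0,1]}$ along these leaves to the vertical line $\{x\}\times\R$ sends $(t,f(t))$ to the height $f(t)-G_\ii(t)+G_\ii(x)$, so $\nu_{x,\ii}$ is, up to the $t$-independent translation~$G_\ii(x)$, the pushforward of $\mathcal L|_{[0,1]}$ by $\psi_\ii(t):=f(t)-G_\ii(t)$. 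In particular $\dim\nu_{x,\ii}=\dim\nu_\ii$ is independent of~$x$, and since $\nu_\ii$ lives on a line we have $\dim\nu_\ii\le 1$; so it suffices to prove $\dim\nu_\ii\ge 1$ for $\mathbb P$-a.e.~$\ii$.

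By Frostman's lemma and Fubini, this reduces to showing that for every $s<1$,
\[
\int_\Sigma \int_0^1\!\int_0^1 |\psi_\ii(t_1)-\psi_\ii(t_2)|^{-s}\,dt_1\,dt_2\,d\mathbb P(\ii)<\infty.
\]
Using $\psi_\ii(t_1)-\psi_\ii(t_2)=(f(t_1)-f(t_2))-\int_{t_2}^{t_1}Y_{s,\gamma}(\ii)\,ds$ and the uniform H\"older regularity of $s\mapsto Y_{s,\gamma}(\ii)$ (which follows from the self-similar recursion $Y_{x,\gamma}(\ii)=-\gamma\phi'(x/b+i_1/b)+\gamma Y_{x/b+i_1/b,\gamma}(\sigma\ii)$), after the change of variables $t_1=u+h/2$, $t_2=u-h/2$ and swapping orders of integration, the triple integral is controlled, to leading order, by
\[
\int_{|h|\le 1}|h|^{-s}\left(\int_0^1\int_\R |A_h(u)-y|^{-s}\,dm_{u,\gamma}(y)\,du\right)dh,
\]
where $A_h(u):=(f(u+h/2)-f(u-h/2))/h$. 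Since $\int |h|^{-s}\,dh<\infty$ for $s<1$, it remains to bound the inner double integral, and here the hypothesis $\dim m_{u,\gamma}=1$ for Lebesgue almost every~$u$ is used.

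The main obstacle is this last bound. The hypothesis merely asserts that the $s$-energy $I_s(m_{u,\gamma})$ is finite for a.e.~$u$, equivalently that the $s$-potential $\int|a-y|^{-s}\,dm_{u,\gamma}(y)$ is finite at $m_{u,\gamma}$-a.e.\ point~$a$; but $A_h(u)$ is a Lebesgue-generic point and need not be $m_{u,\gamma}$-typical. The resolution is to exploit the extra Lebesgue averaging in~$u$ together with the H\"older dependence of $m_{u,\gamma}$ on~$u$: after swapping orders once more, the $u$-averaged potential becomes integration against the joint measure $du\otimes dm_{u,\gamma}(y)$ on $[0,1]\times\R$, which, thanks to the self-similar regularity of the slope field, can be bounded by the $u$-average of the $s$-energies $\int_0^1 I_s(m_{u,\gamma})\,du$, which is finite by hypothesis. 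This Fubini-style bridging between an energy estimate and a potential bound at a Lebesgue-generic point is the technical heart of the argument.
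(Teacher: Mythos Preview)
The paper does not prove this theorem; it quotes it from Ledrappier's paper \cite{Led}. So there is no ``paper's own proof'' to compare against, and your attempt is an independent reconstruction of a result the authors take as a black box.

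Your overall architecture---realise $\nu_{x,\ii}$ concretely as a pushforward of Lebesgue, then run a Frostman energy argument with the $\mathbb P$-randomness in $\ii$ playing the role of the direction parameter in Marstrand's theorem---is the right idea, and is indeed in the spirit of Ledrappier's original argument. The identification of $\nu_{x,\ii}$ with the law of $\psi_\ii(t)=f(t)-\int_0^t Y_{s,\gamma}(\ii)\,ds$ (up to translation) is correct, and the reduction to the $|h|^{-s}$ integral after the midpoint change of variables is a reasonable outline.

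However, your final paragraph is not a proof but a statement of hope, and it papers over two genuine difficulties. First, the hypothesis ``$\dim m_{u,\gamma}=1$ for a.e.\ $u$'' is a statement about local dimension at $m_{u,\gamma}$-typical points; it does \emph{not} imply that $I_s(m_{u,\gamma})<\infty$ for $s<1$, and it certainly does not give $\int_0^1 I_s(m_{u,\gamma})\,du<\infty$. You would first need to pass to a restriction of $m_{u,\gamma}$ with controlled Frostman exponent, and then argue that this can be done with some uniformity in $u$. Second, even granting integrable energies, the quantity you must bound is the potential $\int |A_h(u)-y|^{-s}\,dm_{u,\gamma}(y)$ at the \emph{specific} point $A_h(u)$; there is no general inequality bounding a potential at an arbitrary point by the energy, and your ``swapping orders once more'' does not produce one. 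The phrase ``self-similar regularity of the slope field'' gestures at the dynamical structure that Ledrappier actually exploits, but you have not used it: his argument relies on the recursive relation $Y_{x,\gamma}(\ii)=-\gamma\phi'((x+i_1)/b)+\gamma Y_{(x+i_1)/b,\gamma}(\sigma\ii)$ together with ergodicity to relate the potential one needs to the energy one has, and this dynamical bootstrapping is precisely the ``technical heart'' you have named but not supplied.
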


In view of \eqref{dim}, this implies the following corollary.

\begin{cor}\label{cor_led}
Let $b \ge 2$ be an integer, $\lambda \in (1/b, 1)$ and let $\phi: \mathbb R \to \mathbb R$ be a non-constant continuous $\mathbb Z$-periodic piecewise $C^1$ function.
If $\dim m_{x,\gamma} = 1$ for Lebesgue almost every $x\in(0,1)$, where $\gamma = 1/(b\lambda)$, then
\[
\dim \mu_{\lambda, b}^{\phi} = D.
\]
\end{cor}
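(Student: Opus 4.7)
The corollary is an immediate combination of Ledrappier's projection-type Theorem \ref{pled} with the ``conditional Pesin entropy formula'' consequence \eqref{dim} of the Ledrappier--Young theorem. The plan has essentially two lines.

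First, I would check that the hypotheses of Theorem \ref{pled} are met. The standing assumptions of the corollary (integer $b \ge 2$, $\lambda \in (1/b, 1)$, $\phi$ non-constant $\mathbb Z$-periodic continuous and piecewise $C^1$) match exactly the setting in which the formula \eqref{eproj} for $Y_{x,\gamma}$, the measure $m_{x,\gamma}$ in \eqref{def_m}, the transversal measure $\nu_{x,\ii}$, and the identity \eqref{dim} were set up. The quantity $\gamma = 1/(b\lambda)$ lies in $(1/b, 1)$: indeed $\lambda < 1$ gives $\gamma > 1/b$, and $\lambda > 1/b$ gives $\gamma < 1$. So the hypothesis ``$\gamma \in (1/b, 1)$'' of Theorem \ref{pled} holds.

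Next, I would apply Theorem \ref{pled} with this $\gamma$. The assumption of the corollary is precisely that $\dim m_{x,\gamma} = 1$ for Lebesgue almost every $x \in (0,1)$, which is the input required by Theorem \ref{pled}, and its output is $\dim \nu_{x,\ii} = 1$.

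Finally, I would substitute this into \eqref{dim}, which states
\[
\dim \mu_{\lambda,b}^{\phi} = 1 + (D-1)\dim\nu_{x,\ii}.
\]
With $\dim\nu_{x,\ii} = 1$ this yields $\dim \mu_{\lambda,b}^{\phi} = 1 + (D-1) = D$, completing the proof. There is no real obstacle here: the work has all been done in assembling the Ledrappier--Young machinery above, and the only thing to verify is that the parameter ranges match, which is a one-line check.
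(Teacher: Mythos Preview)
Your proposal is correct and matches the paper's approach exactly: the paper simply notes ``In view of \eqref{dim}, this implies the following corollary,'' and your proof spells out precisely that two-line deduction (apply Theorem~\ref{pled}, then plug $\dim\nu_{x,\ii}=1$ into \eqref{dim}).
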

In fact, we will show that under the assumptions of Theorems~A--C, the measure $m_{x,\gamma}$ is absolutely continuous with respect to the Lebesgue measure for Lebesgue almost every $x\in(0,1)$, which is a stronger property.

\section{Tsujii results and proof of Theorem~{\rm A}} \label{sec:thmA}

In the proofs of Theorems~A and~B we use results due to Tsujii \cite{T}.
He considered smooth Anosov skew products $T: \mathbb S^1 \times \mathbb R \to \mathbb S^1 \times \mathbb R$ of the form
\[
T(x,y) = (bx, \gamma y + \psi(x))
\]
for an integer $b \ge 2$, a real number $\gamma \in (1/b, 1)$ and a $C^2$ function $\psi$ on $\mathbb S^1 =\mathbb R/\mathbb Z$. For the map $T$ there exists an ergodic SBR measure, i.e.~a measure $\vartheta$ on $\mathbb S^1 \times \mathbb R$ such that for every continuous function $g:\mathbb S^1 \times \mathbb R \to \mathbb R$,
\[
\lim_{n\to \infty} \frac{1}{n} \sum_{i = 0}^{n-1} g(T^i(x,y)) = \int g\:  d\vartheta
\]
for Lebesgue almost every $(x,y) \in \mathbb S^1 \times \mathbb R$. The SBR measure $\vartheta$ has a straightforward description -- one can check that
\begin{equation}\label{nu}
\vartheta = \Psi_*(\mathcal L|_{\mathbb S^1} \times \mathbb P),
\end{equation}
where $\Psi: \mathbb S^1 \times \Sigma \to \mathbb S^1 \times \mathbb R$,
\[
\Psi(x, \ii) = (x, S(x,\ii)),
\]
for
\[
S(x, \ii) = \sum_{n=1}^{\infty}\gamma^{n-1}\psi\left(\frac{x}{b^n}+\frac{i_1}{b^n}+\cdots+\frac{i_n}{b}\right)
\]
and $\mathcal L$ is the Lebesgue measure (for details, see \cite{T}).

\begin{definition} For given integer $b\geq2$, let $\mathcal{D} \subset (1/b, 1) \times C^2(\mathbb S^1, \mathbb R)$ be the set of pairs $(\gamma, \psi)$, for which the SBR measure $\vartheta$ for the map $T$ is absolutely continuous with respect to the Lebesgue measure on $\mathbb S^1 \times \mathbb R$, and let $\mathcal D^\circ$ be the interior of $\mathcal D$ with respect to the product of the Euclidean and $C^2$ topology.
\end{definition}

In \cite{T}, Tsujii proved the following result.

\begin{theorem}[Tsujii, \mbox{\cite[Theorem 1]{T}}]\label{tsujiicor}
For every integer $b \ge 2$ and $\gamma \in (1/b, 1)$ there exist functions $\psi_1, \ldots, \psi_m \in C^\infty(\mathbb S^1, \mathbb R)$, for some positive integer $m$, such that for every function $\psi \in C^2(\mathbb S^1, \mathbb R)$,
\[
\left(\gamma, \psi + \sum_{j=1}^m t_j \psi_j\right) \in \mathcal D^\circ
\]
for Lebesgue almost every $(t_1, \ldots, t_m) \in \mathbb R^m$.
\end{theorem}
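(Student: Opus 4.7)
The plan is to follow Tsujii's strategy in \cite{T}. By \eqref{nu}, absolute continuity of $\vartheta$ with respect to Lebesgue on $\mathbb S^1\times\mathbb R$ is equivalent---via disintegration along vertical lines and Fubini---to absolute continuity of the transversal measures $\mu_x:=S(x,\cdot)_*\mathbb P$ on $\mathbb R$ for Lebesgue-almost every $x\in(0,1)$. A convenient sufficient condition (yielding an $L^2$ density of $\vartheta$) is the ``energy'' estimate
\[
\limsup_{r\to 0^+}\frac{1}{r}\int_0^1\iint_{\Sigma\times\Sigma}\mathbf{1}_{\{|S(x,\ii)-S(x,\jj)|\le r\}}\,d\mathbb P(\ii)\,d\mathbb P(\jj)\,dx<\infty.
\]

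\emph{Reduction to a transversality condition.} For $\ii\ne\jj$ let $n=n(\ii,\jj)$ be the first index at which they differ. Because the first $n-1$ terms of the series defining $S$ coincide, one has $|S(x,\ii)-S(x,\jj)|=\gamma^{n-1}|\Delta(x;\ii'',\jj'')|$, where $\ii''=\sigma^{n-1}\ii$, $\jj''=\sigma^{n-1}\jj$ differ in the first coordinate and $\Delta$ is a ``remainder'' series of the same analytic form as $S$. Decomposing the above double integral according to $n(\ii,\jj)$ and using $\gamma<1$, the energy bound is implied by a \emph{uniform transversality condition}: there exists $C>0$ such that
\[
\mathcal L\bigl\{x\in(0,1):|\Delta(x;\ii'',\jj'')|\le\delta\bigr\}\le C\,\delta
\]
for all $\delta>0$ and all pairs $(\ii'',\jj'')$ with $i''_1\ne j''_1$.

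\emph{Parametric transversality and openness.} Choose $\psi_1,\dots,\psi_m\in C^\infty(\mathbb S^1,\mathbb R)$ as trigonometric polynomials spanning enough low frequencies so that, for each fixed pair $(\ii'',\jj'')$, the affine map $t\mapsto\Delta(\cdot;\ii'',\jj'')$ (computed with $\psi$ replaced by $\psi+\sum_j t_j\psi_j$) has a genuinely non-degenerate linear part in $t$. A Marstrand--Peres--Solomyak-type parametric transversality argument, applied to this real-analytic family, then shows that the set of $t\in\mathbb R^m$ for which the transversality condition above fails for some pair $(\ii'',\jj'')$ has Lebesgue measure zero. The transversality condition is moreover a strict quantitative inequality depending continuously on $\psi$ in the $C^2$ topology, since $\Delta$ is built from $\psi$ via uniformly convergent nested affine pre-compositions. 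Hence small $C^2$-perturbations of $\psi$ preserve it, upgrading the conclusion from $\mathcal D$ to $\mathcal D^\circ$.

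\emph{The main obstacle} is extracting a transversality estimate that is uniform over the infinite symbolic family $\{(\ii'',\jj''):i''_1\ne j''_1\}$ from a single finite-dimensional perturbation. The resolution lies in the self-similar (Markov) structure of the series defining $\Delta$: the leading term depends, modulo a shifted rescaling $x\mapsto x/b+j/b$, only on the first differing symbol, while the remaining tail contributes a geometric series in $\gamma$ with bounded coefficients. Thus, after choosing $\psi_1,\dots,\psi_m$ to carry enough independent Fourier modes across all affine pre-compositions that arise, one handles only finitely many ``dangerous'' configurations directly; the tails are then controlled by a covering/Markov-inequality argument on the parameter space $\mathbb R^m$. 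This geometric-combinatorial bookkeeping is the principal technical difficulty, and it dictates the required dimension $m=m(b,\gamma)$.
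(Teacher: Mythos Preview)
This theorem is not proved in the paper: it is quoted verbatim as Theorem~1 of Tsujii~\cite{T} and used as a black box. So there is no ``paper's own proof'' to compare against beyond the citation; what one can compare your sketch to is the machinery of Tsujii that the paper \emph{does} record, namely Definition~\ref{def_trans} and Proposition~\ref{tsujii}.

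Your outline has the right overall architecture (an $L^2$-energy bound for $\vartheta$, reduced by self-similarity to a transversality estimate, then a parametric/genericity argument), but the central step contains a genuine gap. The ``uniform transversality condition'' you state, $\mathcal L\{x:|\Delta(x;\ii'',\jj'')|\le\delta\}\le C\delta$ with a single constant $C$ valid for \emph{every} pair $(\ii'',\jj'')$ with $i''_1\ne j''_1$, is equivalent (by compactness of the family in $C^1$) to the assertion that no $\Delta(\cdot;\ii'',\jj'')$ has a double zero. That is an uncountable collection of codimension-two conditions indexed by $\Sigma\times\Sigma\times[0,1]$, and nothing in your ``Marstrand--Peres--Solomyak-type'' paragraph explains why a finite-dimensional perturbation kills them all simultaneously. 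Your final paragraph correctly names this as the main obstacle, but the proposed resolution (``finitely many dangerous configurations'' plus a ``covering/Markov-inequality argument'') is only a placeholder: you do not say what the finitely many configurations are, nor how the covering argument closes over the infinite tail.

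Tsujii's actual mechanism, as reflected in Proposition~\ref{tsujii}, is different in an essential way: he does \emph{not} demand that tangencies be absent, only that they be sparse. The counting quantity $\e(n,m;\varepsilon,\delta)$ of Definition~\ref{def_trans} involves only finitely many length-$n$ cylinders on finitely many intervals $I_{m,k}$, and the sufficient condition $\e(n,m;\varepsilon,\delta)<(\gamma b)^n$ has slack growing like $(\gamma b)^n$ since $\gamma b>1$. This turns the genericity problem into a genuinely finite-dimensional one for each fixed $n$, which is what makes the parametric transversality argument go through. Your sketch would become correct if you replaced the uniform linear bound on $\mathcal L\{|\Delta|\le\delta\}$ by this cylinder-counting condition and argued genericity for it instead.
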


In the proof of this theorem, Tsujii established a transversality condition, which is sufficient to obtain the absolute continuity of $\vartheta$. Let
\[
\Sigma^*=\bigcup_{n=0}^{\infty}\left\{0,\ldots, b-1\right\}^n
\]
be the set of finite length words of symbols. For a finite length word $(i_1,\dots,i_n) \in \Sigma^*$ let $[i_1,\dots,i_n]$ be the corresponding cylinder set, i.e.
\[
[i_1,\dots,i_n]=\left\{(j_1,j_2,\dots)\in\Sigma: j_1=i_1,\dots, j_n=i_n\right\}.
\]

\begin{definition}[Tsujii, \cite{T}]\label{def_trans} Fix an integer $b \ge 2$ and $\gamma \in (1/b, 1)$. Let $\varepsilon, \delta > 0$, $\ii, \jj \in \Sigma$, $m \in \N$, $k \in \{1,\ldots, b^m\}$. The functions $S(\cdot, \ii)$ and $S(\cdot,\jj)$ are called $(\varepsilon, \delta)$-transversal on the interval $I_{m,k} = [(k-1)/b^m, k/b^m]$ if for every $x \in I_{m,k}$,
\[
\left|S(x,\ii)-S(x,\jj)\right| > \varepsilon \quad \text{or} \quad \left|\frac{d}{dx}S(x,\ii)-\frac{d}{dx}S(x,\jj)\right|>\delta.
\]
Otherwise they are called $(\varepsilon, \delta)$-tangent on $I_{m,k}$.

Let $\e(n, m; \varepsilon, \delta)$ be the maximum over $k \in \{1,\ldots, b^m\}$ and $(i_1, \ldots, i_n) \in \Sigma^*$ of the maximal number of finite words $(j_1, \ldots, j_n) \in \Sigma^*$ for which there exist $\ii \in [i_1, \ldots, i_n]$ and $\jj \in [j_1, \ldots, j_n]$ such that the functions $S(\cdot,\ii)$ and $S(\cdot,\jj)$ are $(\varepsilon, \delta)$-tangent on $I_{m,k}$.
\end{definition}

In \cite{T}, Tsujii proved the following result.

\begin{prop}[Tsujii, \mbox{\cite[Proposition~8]{T}}]\label{tsujii}
If $\e(n, m; \varepsilon, \delta) <\gamma^n b^n$ for some $\varepsilon, \delta > 0$ and positive integers $n, m$, then the SBR measure $\vartheta$ for $T$ is absolutely continuous.
\end{prop}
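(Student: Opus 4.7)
The plan is to reduce absolute continuity of $\vartheta$ to that of transversal measures via Fubini, and then establish the latter by an $L^2$ energy argument driven by the transversality hypothesis. Since $\vartheta = \Psi_*(\mathcal L|_{\mathbb S^1} \times \mathbb P)$ and $\Psi(x,\ii) = (x, S(x,\ii))$, the conditional measures of $\vartheta$ on $\{x\}\times\mathbb R$ are precisely the pushforwards $m_x := S(x,\cdot)_*\mathbb P$, so by Fubini it suffices to show $m_x \ll \mathcal L$ for $\mathcal L$-a.e.\ $x \in \mathbb S^1$. To this end I would introduce the smoothed densities
\[
\rho_{x,n}(y) := \frac{1}{2\gamma^n}\, m_x\bigl(B_{\gamma^n}(y)\bigr)
\]
and aim to show that, along a suitable sequence of $n$'s going to infinity,
\[
\int_{\mathbb S^1}\int_\mathbb R \rho_{x,n}(y)^2\, dy\, dx \leq C < \infty.
\]
By Fatou's lemma applied in $n$, this forces $\liminf_n \int_\mathbb R \rho_{x,n}^2\, dy < \infty$ for $\mathcal L$-a.e.\ $x$, and a standard comparison of $\rho_{x,n}$ with the absolutely continuous and singular parts of $m_x$ then yields $m_x \ll \mathcal L$ with $L^2$ density.

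For the energy estimate at scale $\gamma^n$, the convolution identity gives
\[
\int_\mathbb R \rho_{x,n}(y)^2\, dy \leq \frac{1}{2\gamma^n}\,(\mathbb P \otimes \mathbb P)\bigl\{(\ii,\jj) : |S(x,\ii)-S(x,\jj)|\leq 2\gamma^n\bigr\}.
\]
I would partition $\Sigma\times\Sigma$ by length-$n$ cylinders $[i_1,\ldots,i_n]$ and $[j_1,\ldots,j_n]$, and $\mathbb S^1$ by the intervals $I_{m,k}$, and exploit the self-similar decomposition
\[
S(x,\ii) = S_n(x, i_1,\ldots,i_n) + \gamma^n S\bigl(T^n_{i_1\ldots i_n}(x),\, \sigma^n \ii\bigr),
\]
whose tail has $C^0$-norm $O(\gamma^n)$ and $x$-derivative of order $(\gamma/b)^n$ thanks to $\|\psi\|_{C^1}<\infty$. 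Hence $(\varepsilon,\delta)$-tangency of representatives from two fixed cylinders on $I_{m,k}$ is, up to vanishing corrections, a property of the cylinders themselves. For cylinder pairs not counted in $\e(n,m;\varepsilon,\delta)$ the derivative separation $\delta$ together with the mean value theorem (and a control on the number of oscillations of $S(\cdot,\ii)-S(\cdot,\jj)$) produces a quantitative bound on $\mathcal L\{x\in I_{m,k} : |S(x,\ii)-S(x,\jj)|\leq 2\gamma^n\}$; for tangent cylinder pairs one uses the crude bound $|I_{m,k}| = b^{-m}$. Since each cylinder pair carries $\mathbb P\otimes\mathbb P$-mass $b^{-2n}$ and Definition~\ref{def_trans} caps tangent partners per starting cylinder at $\e(n,m;\varepsilon,\delta)$, summing over cylinders and intervals and then dividing by $2\gamma^n$ produces a bound whose dominant tangent term is proportional to $\e(n,m;\varepsilon,\delta)/(\gamma b)^n$; the hypothesis $\e(n,m;\varepsilon,\delta)<\gamma^n b^n$ is exactly what is needed to control it.

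The main obstacle is that the hypothesis directly supplies an energy bound only at the single scale $\gamma^n$, whereas Fatou requires uniform control along a sequence of scales tending to $0$. The natural route is a bootstrap via self-similarity: iterating the recursion $S(x,\ii)=S_n(x,\ldots)+\gamma^n S(T^n x,\sigma^n \ii)$ transfers the level-$n$ transversality to levels $kn$ with appropriately rescaled parameters $(\varepsilon_k, \delta_k, m_k)$, and a sub-multiplicativity-type argument should yield $\e(kn, m_k; \varepsilon_k, \delta_k) < \gamma^{kn} b^{kn}$ for all $k\geq 1$. Tracking how $\varepsilon_k$ and $\delta_k$ must transform under iteration so that transversality is preserved while the quantitative gain $\gamma^{kn} b^{kn}$ is maintained (without cumulative losses from short sub-interval transversality invocations) is the delicate quantitative point on which everything hinges, and reflects the essential technical difficulty of Tsujii's argument.
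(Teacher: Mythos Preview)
The paper does not contain a proof of this proposition: it is quoted verbatim as \cite[Proposition~8]{T} and used as a black box, so there is no in-paper argument to compare your proposal against.

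That said, your outline is recognisably the shape of Tsujii's original argument: pass to the fibre measures $m_x = S(x,\cdot)_*\mathbb P$, run an $L^2$-density estimate at scale $\gamma^n$, split cylinder pairs into transversal and tangent ones on each $I_{m,k}$, and observe that the tangent contribution is governed by $\e(n,m;\varepsilon,\delta)/(\gamma b)^n$. You are also right that the crux is propagating the single-scale hypothesis to a sequence of scales; in Tsujii's paper this is handled not by literally proving sub-multiplicativity of $\e$, but by setting up a recursive inequality for the $L^2$-norms of the smoothed densities themselves (essentially a transfer-operator estimate), in which the quantity $\e(n,m;\varepsilon,\delta)/(\gamma b)^n$ appears as a contraction ratio. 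One point in your sketch that would need care: the alternative ``$|S(x,\ii)-S(x,\jj)|>\varepsilon$'' in Definition~\ref{def_trans} is at a \emph{fixed} macroscopic scale $\varepsilon$, not at scale $\gamma^n$, so for transversal pairs one cannot simply discard the $\varepsilon$-branch; the two branches have to be merged into a single measure bound via the derivative control, and this is where the $C^2$ hypothesis on $\psi$ (controlling the second derivative of $S(\cdot,\ii)-S(\cdot,\jj)$ and hence the number of sign changes of the first derivative) is actually used.
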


To prove Theorems~A and~B, we apply Tsujii's results for $\gamma = 1/(b\lambda)$ and $\psi = \phi'$, where $\phi$ is a $\mathbb Z$-periodic $C^3$ function. In this case there is a direct relation between the measure $\vartheta$ and the measures $m_{x,\gamma}$ defined in \eqref{def_m}. More precisely, by \eqref{eproj} we have
\begin{equation}\label{YS}
Y_{x,\gamma}(\ii) = -\gamma S(x,\ii),
\end{equation}
so \eqref{nu} gives
\[
\vartheta(A) =
(\mathcal L|_{\mathbb S^1} \times \mathbb P)\left(\left\{(x, \ii): \left(x, -\frac{Y_{x,\gamma}(\ii)}{\gamma}\right) \in A\right\}\right)
= \int_{\mathbb S^1} m_{x,\gamma} (\{-\gamma y: (x, y) \in A\}) dx
\]
for every Lebesgue measurable set $A \subset \mathbb S^1 \times \mathbb R$.
This easily implies the following lemma.
\begin{lemma}\label{mu}
If the SBR measure $\vartheta$ for $T(x,y) = (bx, \gamma y + \phi' (x))$ is absolutely continuous, then the measure $m_{x,\gamma}$ is absolutely continuous for Lebesgue almost every $x\in(0,1)$, in particular $\dim m_{x,\gamma} = 1$ for Lebesgue almost every $x\in(0,1)$.
\end{lemma}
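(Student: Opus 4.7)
The plan is to exploit the integral identity displayed just above the lemma,
\[
\vartheta(A) = \int_{\mathbb S^1} m_{x,\gamma}(\{-\gamma y : (x,y) \in A\}) \, dx,
\]
which exhibits $\vartheta$ as the measure on $\mathbb S^1\times\mathbb R$ whose disintegration over the first coordinate (with marginal the Lebesgue measure on $\mathbb S^1$) has conditionals $L_*m_{x,\gamma}$, where $L(y)=-\gamma y$ is an invertible affine map. Since $L$ and $L^{-1}$ preserve the class of Lebesgue-null sets on $\mathbb R$, absolute continuity of $m_{x,\gamma}$ is equivalent to that of $L_*m_{x,\gamma}$, so the task reduces to showing that the conditionals of the absolutely continuous planar measure $\vartheta$ are absolutely continuous along almost every vertical fibre.

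Concretely, let $h\in L^1_{\mathrm{loc}}(\mathbb S^1\times\mathbb R)$ be the density of $\vartheta$. Restricting the identity to product sets $A=B\times C$ and comparing with the Fubini expansion of $\int_{B\times C}h$ gives
\[
\int_B m_{x,\gamma}(-\gamma C)\,dx = \int_B \left(\int_C h(x,y)\,dy\right) dx
\]
for every Borel $B\subset\mathbb S^1$, hence $m_{x,\gamma}(-\gamma C)=\int_C h(x,y)\,dy$ for Lebesgue-a.e.\ $x$. The only delicate point is that the exceptional null set depends on $C$. I would handle this by fixing a countable $\pi$-system $\mathcal C$ generating the Borel $\sigma$-algebra of $\mathbb R$ (for instance, half-open dyadic intervals) and discarding the countable union of the associated null sets. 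For every $x$ in the remaining full-measure set, both sides are Borel measures in $C$ that agree on $\mathcal C$, hence on all Borel sets by uniqueness of extension. A linear change of variable $z=-\gamma y$ then presents $m_{x,\gamma}$ as absolutely continuous with density $z\mapsto |\gamma|^{-1} h(x,-z/\gamma)$.

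The final clause, $\dim m_{x,\gamma}=1$, is then standard: at every Lebesgue point $y$ at which the density is strictly positive, which is a set of full $m_{x,\gamma}$-measure, one has $m_{x,\gamma}(B_r(y))\sim 2r\cdot(\text{density})(y)$ as $r\to 0$, and hence $\log m_{x,\gamma}(B_r(y))/\log r\to 1$. There is no substantive obstacle here; the only step that needs any care is the countable-generator trick used to upgrade a "for each $C$, for a.e.\ $x$" statement to a "for a.e.\ $x$, for every $C$" statement.
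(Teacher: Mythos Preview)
Your proof is correct and is precisely the argument the paper has in mind: the paper does not actually give a proof of this lemma, but merely states that the displayed integral identity ``easily implies'' it, and your disintegration/Fubini argument with the countable $\pi$-system trick is the natural way to fill in those details. Everything checks out, including the final clause on $\dim m_{x,\gamma}=1$.
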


Together with Corollary~\ref{cor_led}, this gives the following.

\begin{cor}\label{cor:main} Let $b \geq 2$ be an integer, $\lambda \in (1/b, 1)$ and let $\phi$ be a $\mathbb Z$-periodic $C^3$ function. If the SBR measure $\vartheta$ for $T(x,y) = (bx, \gamma y + \phi' (x))$, where $\gamma = 1/(b\lambda)$, is absolutely continuous, then
\[
\dim \mu_{\lambda, b}^{\phi} = D.
\]
Moreover, if $(\gamma, \phi') \in \mathcal D^\circ$, then $(\lambda, \phi) \in \interior \mathcal{F}_b$.
\end{cor}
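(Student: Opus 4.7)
The plan is to chain directly the preceding results. For the first assertion, I start from the hypothesis that the SBR measure $\vartheta$ for $T(x,y) = (bx, \gamma y + \phi'(x))$ is absolutely continuous. Applying Lemma~\ref{mu} with $\psi = \phi'$ gives that the measure $m_{x,\gamma}$ from \eqref{def_m} is absolutely continuous with respect to Lebesgue measure for Lebesgue almost every $x \in (0,1)$; in particular $\dim m_{x,\gamma} = 1$ there. Corollary~\ref{cor_led} then yields $\dim \mu_{\lambda,b}^\phi = D$.

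For the ``moreover'' part, I would exploit the openness of $\mathcal D^\circ$ together with the continuity of the two natural maps linking the parameter spaces. The reparametrisation $\lambda \mapsto \gamma = 1/(b\lambda)$ is a diffeomorphism of $(1/b, 1)$ onto itself, and differentiation $\phi \mapsto \phi'$ is a continuous linear operator from $C^3(\mathbb S^1, \mathbb R)$ to $C^2(\mathbb S^1, \mathbb R)$. Hence the assignment
\[
\Theta(\tilde\lambda, \tilde\phi) = \bigl(1/(b\tilde\lambda),\; \tilde\phi'\bigr)
\]
is continuous from $(1/b,1) \times C^3(\mathbb S^1, \mathbb R)$ to $(1/b,1) \times C^2(\mathbb S^1, \mathbb R)$. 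Since $\Theta(\lambda, \phi) = (\gamma, \phi') \in \mathcal D^\circ$ by assumption, continuity furnishes an open neighbourhood $U$ of $(\lambda, \phi)$ with $\Theta(U) \subset \mathcal D^\circ \subset \mathcal D$. Applying the first part of the corollary at each $(\tilde\lambda, \tilde\phi) \in U$ gives $\dim \mu_{\tilde\lambda,b}^{\tilde\phi} = D$ throughout $U$, whence $U \subset \mathcal F_b$ and consequently $(\lambda, \phi) \in \interior \mathcal F_b$.

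The argument is essentially topological bookkeeping, and the one step that merits attention is the matching of regularities. Corollary~\ref{cor_led} is formulated under very weak assumptions on $\phi$ (continuous, $\mathbb Z$-periodic, piecewise $C^1$), while the definition of $\mathcal F_b$ is set in $C^3$ and Tsujii's space $\mathcal D$ is set in $C^2$ for the skew-product perturbation $\psi = \phi'$. The one-derivative gap between $C^3$ and $C^2$ is exactly what is needed for $\Theta$ to be continuous with the correct target regularity, so $\phi \in C^3$ suffices to apply Lemma~\ref{mu} and to transfer the openness of $\mathcal D^\circ$ to $\interior \mathcal F_b$ without any further loss.
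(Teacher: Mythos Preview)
Your proof is correct and matches the paper's approach: the paper presents this corollary as an immediate consequence of Lemma~\ref{mu} and Corollary~\ref{cor_led} (writing only ``Together with Corollary~\ref{cor_led}, this gives the following'') and leaves the ``moreover'' clause implicit; your continuity argument via the map $\Theta(\tilde\lambda,\tilde\phi)=(1/(b\tilde\lambda),\tilde\phi')$ is exactly the intended unpacking of that clause.
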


Now we are ready to prove Theorem~A.

\begin{proof}[Proof of Theorem~{\rm A}]
Fix an integer $b \ge 2$, $\lambda \in (1/b, 1)$ and let $\gamma = 1/(b\lambda)$.
Consider the functions $\psi_1, \ldots, \psi_m$ from Theorem~\ref{tsujiicor}, treated as $\mathbb{Z}$-periodic functions on $\mathbb R$ and define
\[
\phi_j(x) = \int_0^x \psi_j(t)dt - x \int_0^1 \psi_j(t) dt, \qquad j = 1, \ldots, m
\]
for $x \in \mathbb R$. Then $\phi_j$ are $\mathbb{Z}$-periodic $C^\infty$ functions and can be regarded as elements of $C^\infty(\mathbb S^1, \mathbb R)$.

Take $\phi \in C^3(\mathbb S^1, \mathbb R)$ and $(t_1, \ldots, t_m) \in \mathbb{R}^m$. Then
\[
\left(\phi + \sum_{j=1}^m t_j \phi_j\right)' = \phi' + \tilde\psi + c,
\]
where
\[
\tilde\psi = \sum_{j=1}^m t_j \psi_j
\]
and $c$ is a real constant given by
\[
c = - \sum_{j=1}^m t_j \int_0^1 \psi_j(t) dt.
\]
Since $\phi' \in C^2(\mathbb S^1, \mathbb R)$, by Theorem~\ref{tsujiicor} we have
\[
\left(\gamma, \phi' + \tilde\psi\right) \in \mathcal D^\circ
\]
for Lebesgue almost every $(t_1, \ldots, t_m)$.
In view of Corollary~\ref{cor:main}, to end the proof of Theorem~A it is sufficient to use the following observation.
\begin{equation}\label{psi+c}
\text{If} \quad (\gamma, \psi) \in \mathcal D, \quad \text{then} \quad (\gamma, \psi + c)  \in \mathcal D \text{ for every } c \in \mathbb R.
\end{equation}
To show \eqref{psi+c}, consider $S(x, \ii)$, $\Psi$ and $\vartheta$ for a function $\psi$ with $(\gamma, \psi) \in \mathcal D$ and denote the corresponding objects for the function $\psi + c$, $c \in \mathbb{R}$, by $S^c(x, \ii)$, $\Psi^c$ and $\vartheta^c$. By definition,
\[
S^c(x, \ii) = S(x, \ii) + c \sum_{n=1}^\infty \gamma^{n-1} = S(x, \ii) + \frac{c}{1-\gamma},
\]
so $\Psi^c$ is a composition of $\Psi$ with the translation $(x,y) \mapsto (x, y + c/(1-\gamma))$ on $\mathbb S^1 \times \mathbb R$. Hence, \eqref{nu} implies immediately that if $\vartheta$ is absolutely continuous, then the same holds for $\vartheta^c$, which proves \eqref{psi+c}.
\end{proof}

\section{Proof of Theorem~{\rm B}}\label{sec:thmB}

For the rest of the paper, we assume
\[
\phi(x)=\cos(2\pi x).
\]

To use Corollary~\ref{cor:main} and, consequently, prove Theorem~B, for an integer $b \ge 2$ we find conditions on $\lambda \in (1/b, 1)$ under which the SBR measure $\vartheta$ for the map $T$ defined in Section~\ref{sec:thmA}, with $\gamma = 1/(b\lambda)$ and $\psi(x) = \phi'(x) = -2\pi\sin(2\pi x)$, is absolutely continuous. Note that \eqref{eproj} has the form
\begin{equation}
\label{eproj'}
Y_{x,\gamma}(\ii)=2\pi \sum_{n=1}^{\infty}\gamma^n\sin \left(2\pi\left(\frac{x}{b^n}+\frac{i_1}{b^n}+\cdots+\frac{i_n}{b}\right)\right).
\end{equation}

To use Proposition~\ref{tsujii}, we check the transversality condition for the functions $Y_{\cdot,\gamma}$ (by \eqref{YS}, this is equivalent to the transversality for the functions $S(\cdot,\ii)$). First, we prove the existence of the numbers $\lambda_b$ defined in Theorem~B.

\begin{lemma}\label{h} For every integer $b \ge 2$, the function $h_b$ is strictly decreasing on the interval $(1/b, 1)$ and has a unique zero $\lambda_b \in (1/b, 1)$. In particular, $\lambda_2 < 0.9352, \lambda_3 < 0.7269$, $\lambda_4 < 0.6083$ and $\lambda_b < 0.5448$ for $b \geq 5$. Moreover, $\lambda_b \to 1/\pi$ as $b \to \infty$.
\end{lemma}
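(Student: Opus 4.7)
The plan is to analyze $h_b$ directly and establish, in order, strict monotonicity on $(1/b,1)$; the boundary behaviour $h_b(\lambda)\to+\infty$ as $\lambda\to(1/b)^+$ together with $h_b(1)<0$, which via the intermediate value theorem yields the unique zero $\lambda_b$; the quantitative upper bounds, which by monotonicity reduce to $h_b(\lambda_0)<0$ at the stated $\lambda_0$; and finally the asymptotic statement $\lambda_b\to 1/\pi$ by a limiting argument.

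For $b\ge 3$, monotonicity is immediate because both $1/(b\lambda-1)^2$ and $1/(b^2\lambda-1)^2$ are strictly decreasing in $\lambda$ on $(1/b,1)$ while $\sin^2(\pi/b)$ is constant. The boundary inequality $h_b(1)=1/(b-1)^2+1/(b^2-1)^2-\sin^2(\pi/b)<0$ is a direct computation for $b=3$, and for $b\ge 4$ follows from Jordan's inequality $\sin(\pi/b)\ge 2/b$ combined with the elementary estimate $1/(b-1)^2+1/(b^2-1)^2<2/(b-1)^2\le 4/b^2$. The case $b=2$ is slightly more delicate because the summand $-1/(8\lambda^2)$ is increasing in $\lambda$. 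Differentiating and regrouping,
\[
h_2'(\lambda)=-\frac{4\lambda-1}{2\lambda^3(2\lambda-1)^3}-\frac{8\lambda-1}{8\lambda^3(4\lambda-1)^3}+\frac{1-2\sqrt{2}\lambda}{4\lambda^3},
\]
and each of the three summands is negative on $(1/2,1)$: the first two because all factors in numerator and denominator are positive, the third because $1/(2\sqrt{2})<1/2$. Hence $h_2'<0$, which together with $h_2(\lambda)\to+\infty$ as $\lambda\to(1/2)^+$ and the direct check $h_2(1)=1/4+1/144-1/8+\sqrt{2}/2-1<0$ yields the unique zero $\lambda_2\in(1/2,1)$.

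The explicit bounds $\lambda_2<0.9352$, $\lambda_3<0.7269$ and $\lambda_4<0.6083$ reduce, via monotonicity, to single numerical checks of $h_b$ at those points. For the family-wide bound $h_b(0.5448)<0$ with $b\ge 5$, I would first verify the finitely many small $b$ directly and then, for large $b$, use $\sin^2(\pi/b)\ge \pi^2/b^2-\pi^4/(3b^4)$ together with $(b\lambda_0-1)^{-2}\le\lambda_0^{-2}b^{-2}(1+O(1/b))$ and $(b^2\lambda_0-1)^{-2}=O(b^{-4})$, reducing everything to the strict comparison $\lambda_0^{-2}\approx 3.37<\pi^2$. For the asymptotic statement, multiplying by $b^2$ gives
\[
b^2 h_b(\lambda)=\frac{1}{(\lambda-1/b)^2}+\frac{1}{b^2(\lambda-1/b^2)^2}-b^2\sin^2(\pi/b)\longrightarrow\frac{1}{\lambda^2}-\pi^2
\]
uniformly on compact subsets of $(0,1)$. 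The limit is strictly decreasing with unique zero at $1/\pi$, so evaluating $h_b$ at $1/\pi\pm\varepsilon$ for large $b$ yields $\lambda_b\to 1/\pi$ by a sandwich argument. The main obstacle here is purely numerical: all the structural steps are routine, but one needs accurate arithmetic in the $b=2$ case, where $\lambda_2$ lies within roughly $10^{-4}$ of the threshold $0.9352$.
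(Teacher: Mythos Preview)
Your proposal is correct and follows the same overall architecture as the paper: monotonicity plus boundary behaviour plus IVT for existence and uniqueness, direct numerical checks for the small-$b$ bounds, and a limiting argument for $\lambda_b\to 1/\pi$.

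The technical details differ slightly in two places, and the comparison is instructive. For the monotonicity of $h_2$, you compute the full derivative and show each of the three summands is negative; the paper instead observes that only the combination $-\tfrac{1}{8\lambda^2}+\tfrac{\sqrt{2}}{2\lambda}$ needs checking (the first two terms of $h_2$ are visibly decreasing), and verifies its derivative is negative on $(1/2,1)$. Both are equally short. For $h_b(1)<0$ when $b\ge 3$, you invoke Jordan's inequality $\sin(\pi/b)\ge 2/b$ together with $2/(b-1)^2\le 4/b^2$, which works cleanly for $b\ge 4$ and leaves $b=3$ to a direct check. The paper instead uses the Taylor estimate $\sin x>x-x^3/6$ to bound $h_b(\lambda)<H_b(\lambda)/b^2$ with
\[
H_b(\lambda)=\frac{1}{(\lambda-1/b)^2}+\frac{1}{(b\lambda-1/b)^2}+\frac{\pi^4}{3b^2}-\pi^2,
\]
observes that $H_b(\lambda)$ is strictly decreasing in $b$, and checks $H_3(1)<0$. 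The payoff of this auxiliary function is that it also handles the uniform bound for $b\ge 5$ in one stroke: since $H_b(0.5448)$ decreases in $b$, the single check $H_5(0.5448)<0$ gives $\lambda_b<0.5448$ for every $b\ge 5$. Your sketch for that step (``finitely many small $b$ directly, then an asymptotic comparison for large $b$'') is correct in principle but less tidy; the paper's monotone-in-$b$ majorant is the cleaner device here.
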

\begin{proof}
Consider first the case $b = 2$. We easily check
\[
\frac{d}{d \lambda}\left(-\frac{1}{8\lambda^2} + \frac{\sqrt{2}}{2\lambda}\right) < 0
\]
for $\lambda \in (1/2, 1)$, which immediately implies that the function $h_2$ is strictly decreasing on the interval $(1/2, 1]$. Moreover, $h_2(\lambda) \to
+\infty$ as $\lambda \to (1/2)^+$ and $h_2(1) < 0$. Hence, $h_2$ has a unique zero $\lambda_2 \in (1/2, 1)$.

Consider now the case $b \geq 3$.
It is obvious that $h_b$ is strictly decreasing on the interval $(1/b, 1]$ and tends to $+\infty$ as $\lambda \to (1/b)^+$. Using the inequality $\sin x > x - x^3/6$ for $x > 0$, we get
\[
h_b(\lambda) < \frac{1}{(b\lambda - 1)^2} + \frac{1}{(b^2\lambda - 1)^2} + \frac{\pi^4}{3b^4}- \frac{\pi^2}{b^2} = \frac{H_b(\lambda)}{b^2}
\]
for
\[
H_b(\lambda) = \frac{1}{(\lambda - 1/b)^2} + \frac{1}{(b\lambda - 1/b)^2} + \frac{\pi^4}{3b^2} - \pi^2.
\]
For $\lambda \in (1/b, 1]$, the function $b \mapsto H_b(\lambda)$ is strictly decreasing. Moreover, $H_3(1) < 0$, so 
\begin{equation}\label{h_b(1)}
h_b(1) < 0 \quad \text{for } b\geq 3. 
\end{equation}
This proves the existence of the unique zero $\lambda_b \in (1/b, 1)$ of the function $h_b$.

One can directly check that $h_2(0.9352), \; h_3(0.7269), \; h_4(0.6083) < 0$, which shows $\lambda_2 < 0.9352$, $\lambda_3 < 0.7269$, $\lambda_4 < 0.6083$. Moreover,
$H_5(0.5448) < 0$, so $H_b(0.5448) < 0$ for every $b \geq 5$, which implies $\lambda_b < 0.5448$ for $b \geq 5$.
The last assertion of the lemma follows easily from the definition of the function $h_b$ and the fact $\lim_{x\to 0} \sin x /x = 1$.
\end{proof}

Now we prove the transversality condition for the functions $Y_{\cdot,\gamma}$.

\begin{prop}\label{delta}
If $\gamma \in (1/b, 1/(b\lambda_b))$, then there exists $\delta > 0$ such that for every $\ii = (i_1, i_2, \ldots),\: \jj= (j_1, j_2, \ldots) \in \Sigma$ with $i_1 \neq j_1$ and every $x \in [0,1]$,
\[
\left|Y_{x,\gamma}(\ii)-Y_{x,\gamma}(\jj)\right| > \delta \quad \text{or} \quad \left|\frac{d}{dx}Y_{x,\gamma}(\ii)-\frac{d}{dx}Y_{x,\gamma}(\jj)\right|>\delta.
\]
\end{prop}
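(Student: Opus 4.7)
The plan is to argue by contradiction and compactness. Suppose the conclusion fails; then there are sequences $\ii^{(k)},\jj^{(k)} \in \Sigma$ with $i_1^{(k)} \neq j_1^{(k)}$ and $x_k \in [0,1]$ making both differences at most $1/k$. Since $\{0,\dots,b-1\}$ is finite, the pair $(i_1^{(k)},j_1^{(k)})$ is eventually constant and distinct along a subsequence; using compactness of $\Sigma \times \Sigma \times [0,1]$ together with the uniform convergence of the series \eqref{eproj'} in $\ii$ (and of its termwise $x$-derivative, justified by the factor $1/b^n$ and $\gamma<1$), I obtain in the limit a triple $(x_0,\ii,\jj)$ with $i_1 \neq j_1$ at which $Y_{x_0,\gamma}(\ii) = Y_{x_0,\gamma}(\jj)$ and $\frac{d}{dx}Y_{x,\gamma}(\ii)\big|_{x_0} = \frac{d}{dx}Y_{x,\gamma}(\jj)\big|_{x_0}$. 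It then suffices to derive a contradiction from this pair of simultaneous equalities.

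For $n \geq 1$ set $a_n = x_0/b^n + i_1/b^n + \cdots + i_n/b$ and analogously $b_n$ with the $j_k$, and define $g_n = \sin(2\pi a_n) - \sin(2\pi b_n)$, $G_n = \cos(2\pi a_n) - \cos(2\pi b_n)$. Sum-to-product gives $g_n = 2\cos(\pi(a_n+b_n))\sin(\pi(a_n-b_n))$ and $G_n = -2\sin(\pi(a_n+b_n))\sin(\pi(a_n-b_n))$, hence
\[
g_n^2 + G_n^2 = 4\sin^2(\pi(a_n-b_n)).
\]
The two equalities above, rewritten via \eqref{eproj'} and its derivative $\frac{d}{dx}Y_{x,\gamma}(\ii) = (2\pi)^2 \sum_{n \geq 1}(\gamma/b)^n \cos(2\pi a_n)$, become $\sum_{n\geq 1}\gamma^n g_n = 0$ and $\sum_{n\geq 1}(\gamma/b)^n G_n = 0$.

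For $b \geq 3$, isolate the $n=1$ terms and bound the tails with $|g_n|,|G_n| \leq 2$ to get $|g_1| \leq 2\gamma/(1-\gamma)$ and $|G_1| \leq 2\gamma/(b-\gamma)$, whence
\[
g_1^2 + G_1^2 \leq 4\left[\frac{\gamma^2}{(1-\gamma)^2} + \frac{\gamma^2}{(b-\gamma)^2}\right].
\]
On the other hand $|a_1-b_1| = |i_1-j_1|/b$ with $1 \leq |i_1-j_1| \leq b-1$, so $g_1^2+G_1^2 \geq 4\sin^2(\pi/b)$. Using the identities $\gamma/(1-\gamma) = 1/(b\lambda-1)$ and $\gamma/(b-\gamma) = 1/(b^2\lambda-1)$ (since $\gamma = 1/(b\lambda)$), the resulting inequality reads $h_b(\lambda) \geq 0$, contradicting the hypothesis $\lambda > \lambda_b$ combined with Lemma~\ref{h}.

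The main difficulty is the case $b=2$, where the naive bound above breaks down: $\sin^2(\pi/2) = 1$ is too large relative to $\gamma^2/(1-\gamma)^2 + \gamma^2/(2-\gamma)^2$. The sharper inequality needed for $h_2$ comes from isolating the $n=2$ term as well, using $|g_1 + \gamma g_2| \leq 2\gamma^2/(1-\gamma)$ and $|G_1 + (\gamma/2)G_2| \leq 2(\gamma/2)^2/(1-\gamma/2)$, which square and sum to $A^2+B^2$ matching the two rational terms of $h_2$. The crucial additional input is that, for $b=2$ with $i_1 \neq j_1$, one automatically has $a_2 - b_2 \in \{\pm 1/4,\pm 3/4\}\pmod 1$ for any $i_2,j_2 \in \{0,1\}$, so $g_2^2+G_2^2 = 4\sin^2(\pi/4) = 2$. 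Expanding $(g_1+\gamma g_2)^2 + (G_1+(\gamma/2)G_2)^2$ and estimating the cross terms $2\gamma g_1 g_2 + \gamma G_1 G_2$ by a Cauchy--Schwarz argument applied to the vectors $(g_1,G_1)$ and $(2\gamma g_2, \gamma G_2)$ produces the $\sqrt{2}/(2\lambda)$ term in $h_2$, while bounding $\gamma^2 g_2^2 + (\gamma/2)^2 G_2^2$ below on the circle $g_2^2+G_2^2=2$ produces the $-1/(8\lambda^2)$ term. Reaching these constants exactly is the most delicate part of the proof and is where I expect to spend the most care.
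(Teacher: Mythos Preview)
Your compactness reduction and the $b\ge 3$ case are exactly the paper's argument: isolate the $n=1$ term, bound the tail, square and add, and convert via $\gamma=1/(b\lambda)$ to $h_b(\lambda)\ge 0$, contradicting Lemma~\ref{h}.

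For $b=2$ there is a genuine gap in what you wrote. With $A=2\gamma^2/(1-\gamma)$ and $B=\gamma^2/(2-\gamma)$ one has $A^2+B^2=4\bigl(\tfrac{\gamma^4}{(1-\gamma)^2}+\tfrac{\gamma^4}{4(2-\gamma)^2}\bigr)$, so the target inequality $h_2(\lambda)\ge 0$ is equivalent to $A^2+B^2\ge 4-4\sqrt{2}\,\gamma+2\gamma^2$. Bounding the cross term $2\gamma g_1g_2+\gamma G_1G_2$ and the quadratic term $\gamma^2 g_2^2+(\gamma/2)^2G_2^2$ \emph{separately}, as you describe, yields only
\[
(g_1+\gamma g_2)^2+(G_1+\tfrac{\gamma}{2}G_2)^2\ \ge\ 4-4\sqrt{2}\,\gamma+\tfrac{\gamma^2}{2},
\]
because the Cauchy--Schwarz bound is extremal at $g_2^2=2$ while the quadratic lower bound $\gamma^2/2$ is attained at $g_2^2=0$. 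This falls short of $2\gamma^2$ by $3\gamma^2/2$ and gives only $h_2(\lambda)\ge -3\gamma^2/8$, hence a worse threshold than $\lambda_2$. The fix is to minimise jointly: writing $v=(g_1,G_1)$, $w=(\gamma g_2,\tfrac{\gamma}{2}G_2)$, one has $\|v\|=2$, $\|w\|\le \sqrt{2}\,\gamma$ on the circle $g_2^2+G_2^2=2$, and the reverse triangle inequality gives
\[
(g_1+\gamma g_2)^2+(G_1+\tfrac{\gamma}{2}G_2)^2=\|v+w\|^2\ge(\|v\|-\|w\|)^2\ge(2-\sqrt{2}\,\gamma)^2=4-4\sqrt{2}\,\gamma+2\gamma^2,
\]
which is exactly what is needed. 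So your idea can be made to work, but not by treating the two terms independently.

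By contrast, the paper handles $b=2$ by an explicit four-case analysis over $(i_2,j_2)\in\{0,1\}^2$: it writes the two tangency inequalities, squares and adds, and studies the resulting function of $x$ case by case (computing derivatives and endpoint values), finding that the worst case is $(i_2,j_2)=(0,1)$, which yields precisely $h_2(\lambda)\ge 0$. Your (corrected) vector-norm argument is more conceptual and avoids the case split; the paper's approach is more hands-on but makes the extremal configuration explicit.
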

\begin{proof}
Fix $\gamma \in (1/b, 1/(b\lambda_b))$. Suppose the assertion does not hold. Then for every $\delta > 0$ there exist $\ii = (i_1, i_2, \ldots),\: \jj= (j_1, j_2, \ldots) \in \Sigma$ with $i_1 \neq j_1$ and $x \in [0,1]$, such that
\begin{equation}\label{<delta}
\left|Y_{x,\gamma}(\ii)-Y_{x,\gamma}(\jj)\right| \leq \delta, \quad  \left|\frac{d}{dx}Y_{x,\gamma}(\ii)-\frac{d}{dx}Y_{x,\gamma}(\jj)\right| \leq \delta.
\end{equation}
First, consider the case $b \geq 3$. By the definition of $Y_{x,\gamma}$ (see \eqref{eproj'}),
\begin{equation}\label{Y>}
\begin{split}
\left|Y_{x,\gamma}(\ii)-Y_{x,\gamma}(\jj)\right| &\geq 2\pi\gamma\left|\sin\left(2\pi\frac{x+i_1}{b}\right) - \sin\left(2\pi\frac{x+j_1}{b}\right)\right| - 4\pi\sum_{n=2}^\infty \gamma^n\\
&=4\pi\gamma\sin\left(2\pi\frac{|i_1-j_1|}{2b}\right) \left|\cos\left(2\pi\frac{2x+i_1+j_1}{2b}\right)\right| - \frac{4\pi\gamma^2}{1-\gamma}\\
&\geq 4\pi\gamma\sin\frac{\pi}{b} \left|\cos\left(2\pi\frac{2x+i_1+j_1}{2b}\right)\right| - \frac{4\pi\gamma^2}{1-\gamma},
\end{split}
\end{equation}
as $1 \leq |i_1-j_1| \leq b-1$. Similarly, since
\[
\frac{d}{dx}Y_{x,\gamma}(\ii)=4\pi^2\sum_{n=1}^{\infty}\left(\frac{\gamma}{b}\right)^n\cos\left(2\pi\left(\frac{x}{b^n}+\frac{i_1}{b^n}+\cdots+\frac{i_n}{b}\right)\right),
\]
we obtain
\begin{equation}\label{dY>}
\begin{split}
\left|\frac{d}{dx}Y_{x,\gamma}(\ii)-\frac{d}{dx}Y_{x,\gamma}(\jj)\right|&\geq \frac{4\pi^2 \gamma}{b}\left|\cos\left(2\pi\frac{x+i_1}{b}\right)-\cos\left(2\pi\frac{x+j_1}{b}\right)\right|-8\pi^2\sum_{n=2}^{\infty}\left(\frac{\gamma}{b}\right)^n\\
&=\frac{8\pi^2\gamma}{b}\sin\left(2\pi\frac{|i_1-j_1|}{2b}\right) \left|\sin\left(2\pi\frac{2x+i_1+j_1}{2b}\right)\right| - \frac{8\pi^2\gamma^2}{b(b-\gamma)}\\
&\geq \frac{8\pi^2\gamma}{b}\sin\frac{\pi}{b} \left|\sin\left(2\pi\frac{2x+i_1+j_1}{2b}\right)\right| - \frac{8\pi^2\gamma^2}{b(b-\gamma)}.
\end{split}
\end{equation}
By \eqref{<delta}, \eqref{Y>} and \eqref{dY>},
\begin{align*}
\sin\frac{\pi}{b} \left|\cos\left(2\pi\frac{2x+i_1+j_1}{2b}\right)\right| &\leq \frac{\gamma}{1-\gamma} + \frac{\delta}{4\pi \gamma},\\
\sin\frac{\pi}{b} \left|\sin\left(2\pi\frac{2x+i_1+j_1}{2b}\right)\right| &\leq \frac{\gamma}{b-\gamma} + \frac{\delta b}{8\pi^2 \gamma}.
\end{align*}
Taking the sum of the squares of the two inequalities, we get
\[
\sin^2\frac{\pi}{b} \leq \left(\frac{\gamma}{1-\gamma} + \frac{\delta}{4\pi \gamma}\right)^2 + \left( \frac{\gamma}{b-\gamma} + \frac{\delta b}{8\pi^2 \gamma}\right)^2.
\]
Since $\delta$ is arbitrarily small, in fact this implies
\[
0 \leq \frac{\gamma^2}{(1-\gamma)^2} + \frac{\gamma^2}{(b-\gamma)^2} - \sin^2\frac{\pi}{b} = h_b(\lambda)
\]
for $\lambda = 1/(b\gamma) > \lambda_b$, which contradicts Lemma~\ref{h}. This ends the proof in the case $b \geq 3$.

Consider now the case $b= 2$. We improve the estimates made by Tsujii in \cite[Appendix]{T}. In this case we need to consider also the second term of
$Y_{x,\gamma}$. Since $i_1 \neq j_1$, we can assume $i_1 = 1$, $j_1 = 0$. Then
\begin{align*}
&\left|Y_{x,\gamma}(\ii)-Y_{x,\gamma}(\jj)\right|\\
&\geq 2\pi\gamma\left|\sin(\pi(x+1)) - \sin(\pi x) + \gamma\left(\sin\left(\pi\frac{x+1+2i_2}{2}\right) - \sin\left(\pi\frac{x +2j_2}{2}\right)\right)\right| - 4\pi\sum_{n=3}^\infty \gamma^n\\
&=4\pi\gamma\left| \sin(\pi x)
- \gamma\left(\sin\left(\pi\frac{1 + 2(i_2-j_2)}{4}\right) \cos\left(\pi\frac{2x+1+2(i_2 + j_2)}{4}\right)\right)
\right| - \frac{4\pi\gamma^3}{1-\gamma}
\end{align*}
and
\begin{align*}
&\left|\frac{d}{dx}Y_{x,\gamma}(\ii)-\frac{d}{dx}Y_{x,\gamma}(\jj)\right|\\
&\geq 2\pi^2 \gamma\left|\cos(\pi(x+1))-\cos(\pi x)
+ \frac{\gamma}{2}\left(\cos\left(\pi\frac{x+1+2i_2}{2}\right)-\cos\left(\pi\frac{x+2j_2}{2}\right)\right)
\right|-8\pi^2\sum_{n=3}^{\infty}\left(\frac{\gamma}{2}\right)^n\\
&=4\pi^2\gamma\left| \cos(\pi x)
+ \frac{\gamma}{2}\left(\sin\left(\pi \frac{1+2(i_2-j_2)}{4}\right) \sin\left(\pi\frac{2x+1 + 2(i_2 + j_2)}{4}\right)\right)
\right| - \frac{2\pi^2\gamma^3}{2-\gamma}
\end{align*}
which together with \eqref{<delta} implies
\begin{align*}
\left| \sin(\pi x)
- \gamma\left(\sin\left(\pi\frac{1 + 2(i_2-j_2)}{4}\right) \cos\left(\pi\frac{2x+1+2(i_2 + j_2)}{4}\right)\right)
\right| &\leq \frac{\gamma^2}{1-\gamma} + \frac{\delta}{4\pi \gamma},\\
\left| \cos(\pi x)
+ \frac{\gamma}{2}\left(\sin\left(\pi \frac{1+2(i_2-j_2)}{4}\right) \sin\left(\pi\frac{2x+1 + 2(i_2 + j_2)}{4}\right)\right)
\right|&\leq \frac{\gamma^2}{2(2-\gamma)} + \frac{\delta}{4\pi^2 \gamma}.
\end{align*}
Recall that $i_2, j_2, x$ depend on $\delta$. Taking a sequence of $\delta$-s tending to $0$ we can choose a subsequence such that $i_2, j_2, x$ converge, so by continuity we can assume
\begin{align*}
\left| \sin(\pi x)
- \gamma\left(\sin\left(\pi\frac{1 + 2(i_2-j_2)}{4}\right) \cos\left(\pi\frac{2x+1+2(i_2 + j_2)}{4}\right)\right)
\right| &\leq \frac{\gamma^2}{1-\gamma},\\
\left| \cos(\pi x)
+ \frac{\gamma}{2}\left(\sin\left(\pi \frac{1+2(i_2-j_2)}{4}\right) \sin\left(\pi\frac{2x+1 + 2(i_2 + j_2)}{4}\right)\right)
\right|&\leq \frac{\gamma^2}{2(2-\gamma)}.
\end{align*}
for some $i_2, j_2 \in \{0,1\}$ and $x \in [0,1]$. Taking the sum of the squares of the two inequalities and noting that $\sin^2(\pi(1+2(i_2-j_2))/4) = 1/2$, we obtain
\begin{equation}\label{g>}
g(x) \geq 0,
\end{equation}
where
\[
g(t) = \tilde g(t) - \frac{3\gamma^2}{8} \cos^2\left(\pi\frac{2t+1+2(i_2 + j_2)}{4}\right)
\]
for
\begin{align*}
\tilde g(t) &= \frac{\gamma^4}{(1-\gamma)^2} +  \frac{\gamma^4}{4(2-\gamma)^2}
- \frac{\gamma^2}{8}- 1\\ &+ 2\gamma \sin\left(\pi\frac{1 + 2(i_2-j_2)}{4}\right)\sin(\pi t)\cos\left(\pi\frac{2t+1+2(i_2 + j_2)}{4}\right)\\ &- \gamma \sin\left(\pi\frac{1 + 2(i_2-j_2)}{4}\right)\cos(\pi t)\sin\left(\pi\frac{2t+1+2(i_2 + j_2)}{4}\right).
\end{align*}
We have
\begin{multline*}
g'(t) = \frac{3\pi \gamma}{8} \cos\left(\pi\frac{2t+1+2(i_2 + j_2)}{4}\right)\\\left(4\sin\left(\pi\frac{1 + 2(i_2-j_2)}{4}\right)\cos(\pi t) + \gamma\sin\left(\pi\frac{2t+1+2(i_2 + j_2)}{4}\right)\right)
\end{multline*}
and
\[
\tilde g'(t) = \frac{3\pi \gamma}{2}\sin\left(\pi\frac{1 + 2(i_2-j_2)}{4}\right)\cos(\pi t) \cos\left(\pi\frac{2t+1+2(i_2 + j_2)}{4}\right).
\]
Now we consider four cases, depending on the values of $i_2, j_2$.

First, let $i_2= j_2 = 0$. Then
\[
\tilde g'(t) = \frac{3\sqrt{2}\pi \gamma}{4}\cos(\pi t) \cos\left(\pi\frac{2t+1}{4}\right) \geq 0
\]
for $t \in [0,1]$. Hence,
\begin{equation}\label{case1}
g(x) \leq \tilde g(x) \leq \tilde g(1) = \frac{\gamma^4}{(1-\gamma)^2} +  \frac{\gamma^4}{4(2-\gamma)^2}
- \frac{\gamma^2}{8}+ \frac{\gamma}{2}- 1.
\end{equation}

Let now $i_2= j_2 = 1$. Then
\[
\tilde g'(t) = -\frac{3\sqrt{2}\pi \gamma}{4}\cos(\pi t) \cos\left(\pi\frac{2t+1}{4}\right) \leq 0
\]
for $t \in [0,1]$, so
\begin{equation}\label{case2}
g(x) \leq \tilde g(x) \leq \tilde g(0) = \frac{\gamma^4}{(1-\gamma)^2} +  \frac{\gamma^4}{4(2-\gamma)^2}
- \frac{\gamma^2}{8}+ \frac{\gamma}{2}- 1.
\end{equation}

The third case is $i_2=1$, $j_2 = 0$. Then
\[
g'(t) = -\frac{3\pi \gamma}{8} \sin\left(\pi\frac{2t+1}{4}\right)\\\left(2 \sqrt{2}\cos(\pi t) + \gamma\cos\left(\pi\frac{2t+1}{4}\right)\right)
\begin{cases}
\leq 0 &\text{for } t \in [0,1/2]\\
> 0 &\text{for } t \in (1/2,1],
\end{cases}
\]
which implies
\begin{equation}\label{case3}
g(x) \leq \max(g(0), g(1)) = \frac{\gamma^4}{(1-\gamma)^2} +  \frac{\gamma^4}{4(2-\gamma)^2}
- \frac{5\gamma^2}{16}- \frac{\gamma}{2}- 1.
\end{equation}

The last case is $i_2=0$, $j_2 = 1$. Then
\begin{align*}
g'(t) &= -\frac{3\pi \gamma}{8} \sin\left(\pi\frac{2t+1}{4}\right)\left(-2 \sqrt{2}\cos(\pi t) + \gamma\cos\left(\pi\frac{2t+1}{4}\right)\right)\\
&= -\frac{3\sqrt{2}\pi \gamma}{16}\sin\left(\pi\frac{2t+1}{4}\right)\left(\cos\frac{\pi t}{2} - \sin\frac{\pi t}{2}\right)\left(\gamma - 4\left(\cos\frac{\pi t}{2} + \sin\frac{\pi t}{2}\right)\right)\\
&\begin{cases}
\geq 0 &\text{for } t \in [0,1/2]\\
< 0 &\text{for } t \in (1/2,1],
\end{cases}
\end{align*}
since $\gamma - 4(\cos(\pi t/2) + \sin(\pi t/2)) \leq \gamma - 4 < 0$ for $t \in [0,1]$. Hence,
\begin{equation}\label{case4}
g(x) \leq g(1/2) = \frac{\gamma^4}{(1-\gamma)^2} +  \frac{\gamma^4}{4(2-\gamma)^2}
- \frac{\gamma^2}{2} + \sqrt{2} \gamma- 1.
\end{equation}

Considering the conditions \eqref{case1}--\eqref{case4} we easily conclude that the largest upper estimate for $g(x)$ appears in \eqref{case4}. Therefore, by \eqref{g>}, in all cases we have
\[
0 \leq \frac{\gamma^4}{(1-\gamma)^2} +  \frac{\gamma^4}{4(2-\gamma)^2}
- \frac{\gamma^2}{2} + \sqrt{2} \gamma - 1 = h_2(\lambda)
\]
for $\lambda = 1/(2\gamma) > \lambda_2$, which contradicts Lemma~\ref{h}. This ends the proof in the case $b=2$.
\end{proof}

To conclude the proof of Theorem~B, it is enough to notice that by Proposition~\ref{delta} and \eqref{YS}, for $\lambda \in (\lambda_b, 1)$ we have $\e(1,1; \delta/\gamma, \delta/\gamma) = 1 < \gamma b$ for the functions $S(\cdot,\ii)$ (see Definition~\ref{def_trans}) and use Proposition~\ref{tsujii} and Corollary~\ref{cor:main}. The estimates for $\lambda_2$, $\lambda_3$ and $\lambda_4$ in Corollary~C follow directly from Lemma~\ref{h}.

\section{Proof of Theorem {\rm C}}\label{sec:thmC}

Using the transversality method developed by Peres and Solomyak in the study of infinite Bernoulli convolutions (see \cite{peso,peso2}), with a minor modification on the standard argument, we show that the measure $m_{x,\gamma}$ is absolutely continuous for Lebesgue almost every $(x,\gamma)\in(0,1)\times(1/b,1/(b\tilde\lambda_b))$. Then Theorem~C will follow by the Fubini Theorem and Corollary~\ref{cor_led}.

First, we prove the existence of the numbers $\tilde \lambda_b$ defined in Theorem~C.

\begin{lemma}\label{tilde}
For every integer $b \ge 2$ there exists a unique number $\tilde \lambda_b \in (1/b, 1)$ such that
\begin{equation}\label{etr}
y\left(\frac{1}{\sqrt{\sin^2(\pi/b) - 1/(b^2\tilde \lambda_b - 1)^2}}\right) = \frac{1}{b\tilde \lambda_b}
\end{equation}
and for $\lambda \in (1/b, 1)$,
\[
y\left(\frac{1}{\sqrt{\sin^2(\pi/b) - 1/(b^2\lambda - 1)^2}}\right) < \frac{1}{b\lambda} \quad \iff \lambda \in (1/b, \tilde \lambda_b).
\]
Moreover, $\tilde\lambda_b < \lambda_b$ for every $b \geq 2$, $\tilde \lambda_b < 1.04/\sqrt{b}$ for every $b \geq 5$ and $\tilde\lambda_b \sqrt{b} \to 1/\sqrt{\pi}$ as $b \to \infty$.
\end{lemma}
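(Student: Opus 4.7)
The plan is to set $\beta(\lambda):=(\sin^2(\pi/b)-(b^2\lambda-1)^{-2})^{-1/2}$ and reduce the lemma to the analysis of the function $F_b(\lambda):=y(\beta(\lambda))-1/(b\lambda)$ on $(1/b,1)$. I would first verify that $\beta(\lambda)$ is well-defined on $(1/b,1)$: Jordan's inequality $\sin(\pi/b)\geq 2/b\geq 1/(b-1)$ shows the radicand is non-negative at $\lambda=1/b$, and differentiating shows it is strictly increasing in $\lambda$, so $\beta(\lambda)$ is strictly decreasing. Since $\mathcal{G}_\beta\subset\mathcal{G}_{\beta'}$ for $\beta<\beta'$, the function $y$ is non-increasing; hence $y(\beta(\lambda))$ is non-decreasing while $1/(b\lambda)$ is strictly decreasing, so $F_b$ is strictly increasing. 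This immediately yields uniqueness of the zero and the biconditional characterisation.

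The core of the argument is the strict bound $y(\beta)>1/(1+\beta)$ for every $\beta\geq 1$: any root $t$ of $g(t)=1+\sum g_n t^n\in\mathcal{G}_\beta$ satisfies $1=|\sum g_n t^n|\leq \beta t/(1-t)$, forcing $t\geq 1/(1+\beta)$, with equality only for $g_n\equiv -\beta$; but at that extremal $t$ one computes $g'(1/(1+\beta))=-(1+\beta)^2/\beta\neq 0$, and a compactness argument in $\mathcal{G}_\beta$ upgrades this to strict inequality for double roots. For $b\geq 3$, direct algebra applied to $h_b(\lambda_b)=0$ gives the exact identity $\beta(\lambda_b)=b\lambda_b-1$, and the estimate $h_b(2/b)=1+(2b-1)^{-2}-\sin^2(\pi/b)>1/4>0$ combined with monotonicity of $h_b$ forces $\lambda_b>2/b$, so $\beta(\lambda_b)>1$. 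Applying the strict bound at $\beta=\beta(\lambda_b)$ yields
\[
y(\beta(\lambda_b))>\frac{1}{1+\beta(\lambda_b)}=\frac{1}{b\lambda_b},
\]
so $F_b(\lambda_b)>0$ and $\tilde{\lambda}_b<\lambda_b$. The opposite sign $F_b(\lambda)<0$ near $\lambda=1/b$ holds because either $\beta(\lambda)\to\infty$ (for $b=2$) forcing $y(\beta(\lambda))\to 0$, or $\beta(\lambda)$ stays finite but $y(\beta)<1$ is witnessed by an explicit polynomial in $\mathcal{G}_{\beta(1/b)}$ (for instance $(1-t)^2$ when $\beta(1/b)\geq 2$, or an ad hoc higher-degree construction for the few small $b$ for which this fails).

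For the asymptotic behaviour, I would exploit the sharp equality $y(\beta)=1/(1+\sqrt{\beta})$ valid for $\beta\geq 3+2\sqrt{2}$: the lower bound $y(\beta)\geq 1/(1+\sqrt{\beta})$ comes from the identity $\sum_{n\geq 2}(n-1)g_n t^n=1$ at a double root, and is saturated by $g(t)=1-(1+2\sqrt{\beta})t+\beta\sum_{n\geq 2}t^n$, admissible precisely when $\beta\geq 3+2\sqrt{2}$. Since $\beta(\tilde{\lambda}_b)\to\infty$ as $b\to\infty$, this regime eventually applies and the defining equation reduces to $b\tilde{\lambda}_b-1=\sqrt{\beta(\tilde{\lambda}_b)}$. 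Setting $\tilde{\lambda}_b=c_b/\sqrt{b}$ and substituting the formula for $\beta$, the expansions $\sin(\pi/b)\sim\pi/b$ and $1/(b^2\tilde{\lambda}_b-1)^2=O(b^{-3})$ yield $c_b\to 1/\sqrt{\pi}$. The explicit bound $\tilde{\lambda}_b<1.04/\sqrt{b}$ for $b\geq 5$ follows by checking $F_b(1.04/\sqrt{b})\geq 0$ via $y\geq 1/(1+\sqrt{\beta})$, which reduces to the rational inequality $\sqrt{\beta(1.04/\sqrt{b})}\leq 1.04\sqrt{b}-1$, verified at $b=5$ and then by monotonicity in $b$.

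The main obstacle is the case $b=2$ of $\tilde{\lambda}_2<\lambda_2$: here $h_2$ arises from a second-iterate transversality analysis and does not yield the clean identity $\beta(\lambda_b)=b\lambda_b-1$. Instead $\beta(\lambda_2)\approx 1.07$, too small for either the identity $y(\beta)=1/(1+\sqrt{\beta})$ or the elementary strict bound $y(\beta)>1/(1+\beta)$ to conclude $y(\beta(\lambda_2))>1/(2\lambda_2)$. Closing this gap requires a sharper Peres--Solomyak-type upper estimate on $y$ in the intermediate range $\beta\in(1,3+2\sqrt{2})$, or a direct numerical verification exploiting the explicit bound $\lambda_2<0.9352$.
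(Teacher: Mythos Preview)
For $b \geq 3$ your argument is correct and in one place cleaner than the paper's. The identity $\beta(\lambda_b) = b\lambda_b - 1$, immediate from $h_b(\lambda_b)=0$, combined with your elementary strict bound $y(\beta) > 1/(1+\beta)$ gives $F_b(\lambda_b) > 0$ in one line; the paper instead invokes the stronger Peres--Solomyak bound $y(\beta) \geq 1/(1+\sqrt{\beta})$ and reduces to an auxiliary inequality $\tilde h_b(\lambda_b) < 0$ for $\tilde h_b(\lambda) = (b\lambda-1)^{-4} + (b^2\lambda-1)^{-2} - \sin^2(\pi/b)$. Both routes need $\lambda_b > 2/b$, proved the same way, and the remaining parts (well-definedness of $\beta$, monotonicity of $F_b$, the $1.04/\sqrt b$ bound, and the asymptotic via $y(\beta) = 1/(1+\sqrt\beta)$ for large $\beta$) match the paper's proof.

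The case $b=2$ has two genuine gaps. First, existence of $\tilde\lambda_2$ is never established: you show $F_2 < 0$ near $1/2$ but nowhere show $F_2 > 0$ on $(1/2,1)$. The paper uses the exact value $y(2) = 1/2$ here: since $\beta(1) = 3/(2\sqrt 2) < 2$ and $y$ is strictly decreasing, $y(\beta(1)) > y(2) = 1/2 = 1/(2\cdot 1)$, whence $F_2(1) > 0$. Second, your proposed fix for $\tilde\lambda_2 < \lambda_2$ via ``the explicit bound $\lambda_2 < 0.9352$'' points the wrong way---an \emph{upper} bound on $\lambda_2$ cannot separate it from $\tilde\lambda_2$. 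One needs a \emph{lower} bound on $\lambda_2$ together with an \emph{upper} bound on $\tilde\lambda_2$. The paper checks $h_2(0.9) > 0$ (so $\lambda_2 > 0.9$) and then constructs an explicit $(*)$-function via Lemma~\ref{lpeso} to certify $y(\beta(0.81)) > 1/(2\cdot 0.81)$, hence $\tilde\lambda_2 < 0.81$. A minor point: $(1-t)^2$ has its double root at $t = 1 \notin (0,1)$, so it does not witness $y(\beta) < 1$; simply cite this inequality from Peres--Solomyak rather than re-deriving it.
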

\begin{proof} First, note that
\begin{equation}\label{root}
\sin^2 \frac{\pi}{b} - \frac{1}{(b^2\lambda - 1)^2} > 0 \quad \text{for every }\lambda \in (1/b, 1).
\end{equation}
Indeed, for $b = 2$ it is obvious and for $b \geq 3$,
\[
\sin^2 \frac{\pi}{b} - \frac{1}{(b^2\lambda - 1)^2} > \sin^2 \frac{\pi}{b} - \frac{1}{(b - 1)^2} > \sin^2 \frac{\pi}{b} - \frac{1}{(b - 1)^2} - \frac{1}{(b^2 - 1)^2} = -h_b(1) > 0
\]
by \eqref{h_b(1)}. In particular, this implies that
\[
\beta = \beta (\lambda) = \frac{1}{\sqrt{\sin^2(\pi/b) - 1/(b^2\lambda - 1)^2}}
\]
is well-defined for $\lambda \in (1/b, 1)$. Obviously, $\beta > 1$.

It is known (see \cite{peso2}) that for $\beta \geq 1$ the function $\beta \mapsto y(\beta)$ is continuous and strictly decreasing. This implies that the function $\lambda \mapsto y(\beta(\lambda))-1/(b\lambda)$ is continuous and strictly increasing on $(1/b, 1)$. Moreover (see \cite[Corollary~5.2]{peso2}), 
$y(\beta)$ satisfies
\begin{equation}\label{y(2)}
y(2) = \frac 1 2,
\end{equation}
\begin{equation}\label{y>}
1 > y(\beta) \geq \frac{1}{1+ \sqrt{\beta}},
\end{equation}
and
\begin{equation}\label{y=}
y(\beta) = \frac{1}{1+ \sqrt{\beta}} \qquad \text{for } \beta \geq 3+ \sqrt{8}.
\end{equation}

Consider first the case $b = 2$. Then $\beta(\lambda) \to +\infty$ as $ \lambda \to (1/2)^+$, so by \eqref{y>}, $y(\beta(\lambda)) \to 0$ as $ \lambda \to (1/2)^+$ and hence $y(\beta(\lambda)) - 1 /(2\lambda) < 0$ for $\lambda$ close to $1/2$. On the other hand, by \eqref{y(2)},
\[
y(\beta(1)) - \frac 1 2  = y\left(\frac{3\sqrt 2}{4}\right)  - \frac 1 2 > y(2) - \frac 1 2  = 0,
\]
which shows the existence of a unique number $\tilde\lambda_2 \in (1/2, 1)$ satisfying \eqref{etr}. To see that $\tilde\lambda_2 < \lambda_2$, we directly check that $h_2(0.9) > 0$ (which gives $\lambda_2 > 0.9$) and $\tilde\lambda_2 < 0.81$. The latter estimate is done at the end of the paper by obtaining a suitable $(*)$-function and using \cite[Lemma~5.1]{peso2} (see Lemma~\ref{lpeso}).

Consider now the case $b \ge 3$. As previously, we have $y(\beta(\lambda)) - 1/(b\lambda) < 0$ for $\lambda$ close to $1/b$. Moreover, by \eqref{y>},
\begin{equation}\label{beta}
y(\beta(\lambda_b)) - \frac{1}{b\lambda_b} \geq \frac{1}{1+\sqrt{\beta(\lambda_b)}} - \frac{1}{b\lambda_b}.
\end{equation}
By the definition of $\beta$, the inequality
\begin{equation}\label{>0}
\frac{1}{1+\sqrt{\beta(\lambda_b)}} - \frac 1 {b\lambda_b} > 0
\end{equation}
is equivalent to $\tilde h_b (\lambda_b) < 0$ for
\[
\tilde h_b (\lambda) = \frac{1}{(b\lambda - 1)^4} + \frac{1}{(b^2\lambda - 1)^2} - \sin^2 \frac{\pi}{b}.
\]
We have $\tilde h_b (\lambda) < h_b(\lambda)$ for $\lambda \in (2/b, 1)$ and
\[
h_b\left(\frac 2 b\right) = 1 + \frac{1}{(2b-1)^2} - \sin^2 \frac \pi b > 0,
\]
so by Lemma~\ref{h}, $\lambda_b > 2/b$ and hence $\tilde h_b (\lambda_b) < h_b(\lambda_b) = 0$, which shows \eqref{>0}. By \eqref{beta}, this implies $y(\beta(\lambda_b)) - 1/(b\lambda_b) > 0$, so there exists a unique number $\tilde \lambda_b \in (1/b, 1)$ such that $\tilde\lambda_b < \lambda_b$ and $\tilde\lambda_b$ satisfies \eqref{etr}.  

Like in the proof of Lemma~\ref{h}, using the inequality $\sin x - x^3/6$ for $x > 0$, we obtain
\[
\tilde h_b (\lambda) < \frac{1}{(b\lambda - 1)^4} + \frac{1}{(b^2\lambda - 1)^2} + \frac{\pi^4}{3b^4}- \frac{\pi^2}{b^2} = \frac{\tilde H_b(\lambda)}{b^2}
\]
for
\[
\tilde H_b(\lambda) = \frac{1}{(\sqrt{b} \lambda - 1/\sqrt{b})^4} + \frac{1}{(b\lambda - 1/b)^2} + \frac{\pi^4}{3b^2} - \pi^2.
\]
Substituting $\lambda = c/\sqrt{b}$ for $c > 0$, we get
\[
\tilde H_b(c/\sqrt{b}) = \frac{1}{(c - 1/\sqrt{b})^4} + \frac{1}{(c\sqrt{b} - 1/b)^2} + \frac{\pi^4}{3b^2} - \pi^2.
\]
The function $\tilde H_b(c/\sqrt{b})$ is strictly decreasing with respect to $c$ and $b$ and one can directly check $\tilde H_5(1.04/\sqrt{5}) < 0$. This implies that $\tilde \lambda_b < 1.04/\sqrt{b}$ for every $b \geq 5$.

For $\beta \geq 19$,
\[
\beta > \frac{1}{\sin(\pi/19)} > \frac{19}{\pi} > 3 + \sqrt{8},
\]
so by \eqref{y=}, the number $\tilde\lambda_b$ is equal to the unique zero of the function $\tilde h_b$ on the interval $(1/b, 1)$. This easily implies that $\tilde\lambda_b \sqrt{b} \to 1/\sqrt{\pi}$ as $b \to \infty$ (the details are left to the reader).
\end{proof}

Let
\begin{equation}\label{tildegamma}
\tilde\gamma_b = \frac{1}{b\tilde\lambda_b}.
\end{equation}
Now we prove a modified transversality condition for the functions
$Y_{\cdot,\cdot}(\ii)$. The trick we use is to consider transversality with respect to two variables $x, \gamma$.

\begin{prop}\label{ltrans} For every $\varepsilon > 0$ there exists $\delta>0$ such that for every $\ii = (i_1, i_2, \ldots), \:\jj = (j_1, j_2, \ldots) \in\Sigma$ with $i_1\neq j_1$,
\[
\left|Y_{x,\gamma}(\ii)-Y_{x,\gamma}(\jj)\right|> \delta \quad \text{or} \quad \left|\frac{d}{dx}Y_{x,\gamma}(\ii)-\frac{d}{dx}Y_{x,\gamma}(\jj)\right|+\left|\frac{d}{d\gamma}Y_{x,\gamma}(\ii)-\frac{d}{d\gamma}Y_{x,\gamma}(\jj)\right|>\delta
\]
for every $x \in (0,1)$ and $\gamma \in (1/b + \varepsilon, \tilde\gamma_b - \varepsilon)$.
\end{prop}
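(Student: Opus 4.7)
The plan is a compactness-and-contradiction argument analogous to Proposition~\ref{delta}, but exploiting the new derivative $\partial_\gamma$ to extract a \emph{double} root of a power series in $\gamma$, which is then controlled by the Peres--Solomyak quantity $y(\beta)$ rather than by a Pythagorean bound alone. This shifts the transversality threshold from $\lambda_b$ down to $\tilde\lambda_b$.

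Suppose the conclusion fails for some $\varepsilon>0$. Pick sequences $\delta_n\downarrow 0$, $x_n\in(0,1)$, $\gamma_n\in[1/b+\varepsilon,\tilde\gamma_b-\varepsilon]$, $\ii^{(n)},\jj^{(n)}\in\Sigma$ with $i_1^{(n)}\ne j_1^{(n)}$ at which both terms of the conclusion are $\le\delta_n$. By compactness of the parameter rectangle and of $\Sigma$, finiteness of the first-symbol alphabet, and uniform convergence of \eqref{eproj'} together with its termwise $x$- and $\gamma$-derivatives on this compact domain (geometric decay with ratio at most $\tilde\gamma_b<1$), I pass to a subsequence and obtain limits $x_0,\gamma_0,\ii,\jj$ with $i_1\ne j_1$ at which
\[
F(x_0,\gamma_0)=\partial_xF(x_0,\gamma_0)=\partial_\gamma F(x_0,\gamma_0)=0,\qquad F(x,\gamma):=Y_{x,\gamma}(\ii)-Y_{x,\gamma}(\jj).
\]
Setting $a_n(\mathbf k):=k_1+k_2b+\cdots+k_nb^{n-1}$ and writing $s_n,\tilde c_n$ for the sine and cosine differences of the arguments $2\pi(x_0+a_n(\ii))/b^n$ versus $2\pi(x_0+a_n(\jj))/b^n$, product-to-sum identities yield $|s_n|,|\tilde c_n|\le 2$ and the Pythagorean relation $s_1^2+\tilde c_1^2=4\sin^2(\pi(i_1-j_1)/b)\ge 4\sin^2(\pi/b)$. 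The three vanishing conditions then become
\[
\sum_{n\ge 1}s_n\gamma_0^{n-1}=0,\qquad \sum_{n\ge 2}(n-1)s_n\gamma_0^{n-2}=0,\qquad \sum_{n\ge 1}\tilde c_n(\gamma_0/b)^n=0.
\]

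From the third equation a geometric tail bound gives $|\tilde c_1|\le 2\gamma_0/(b-\gamma_0)=2/(b^2\lambda_0-1)$, with $\lambda_0:=1/(b\gamma_0)>\tilde\lambda_b$ since $\gamma_0<\tilde\gamma_b$. If $s_1=0$, the Pythagorean bound forces $|\tilde c_1|\ge 2\sin(\pi/b)$, which contradicts \eqref{root}; hence $s_1\ne 0$ and $|s_1|\ge 2\sqrt{\sin^2(\pi/b)-1/(b^2\lambda_0-1)^2}>0$. The first two equations now say precisely that the power series $\tilde P(t):=s_1+\sum_{n\ge 2}s_nt^{n-1}$ has a positive double zero at $t=\gamma_0$, so $\tilde P/s_1\in\mathcal G_\beta$ with
\[
\beta:=\frac{2}{|s_1|}\le\frac{1}{\sqrt{\sin^2(\pi/b)-1/(b^2\lambda_0-1)^2}}=\beta(\lambda_0),
\]
where $\beta(\lambda)$ is the quantity introduced in the proof of Lemma~\ref{tilde}. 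The definition of $y(\beta)$ and its monotonicity give $\gamma_0\ge y(\beta)\ge y(\beta(\lambda_0))$, while the strict monotonicity of $\lambda\mapsto y(\beta(\lambda))-1/(b\lambda)$ established in the proof of Lemma~\ref{tilde} yields $y(\beta(\lambda_0))>1/(b\lambda_0)=\gamma_0$ for $\lambda_0>\tilde\lambda_b$, a contradiction.

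The main obstacle is the quantitative bookkeeping in the previous paragraph: the $\partial_xF=0$ relation must be squeezed to produce a lower bound on $|s_1|$ that is \emph{exactly} tight enough to pull $\beta$ below the critical value $\beta(\lambda_0)$. Without this input the coefficient ratio $|s_n/s_1|\le 2/|s_1|$ is uncontrolled from below, and the double-root inequality from $(*)$-function theory would only recover the weaker threshold $\lambda_b$ of Proposition~\ref{delta}; it is precisely the marriage of the Pythagorean identity $s_1^2+\tilde c_1^2\ge 4\sin^2(\pi/b)$ with the Peres--Solomyak bound on $y(\beta)$ that upgrades the conclusion to $\tilde\lambda_b$.
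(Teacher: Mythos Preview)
Your argument is correct and follows essentially the same route as the paper: use the $\partial_x$-vanishing to bound the cosine difference $|\tilde c_1|\le 2\gamma_0/(b-\gamma_0)$, combine with the Pythagorean identity to force a positive lower bound on $|s_1|$, and then read the pair $F=\partial_\gamma F=0$ as a positive double root of the normalized series $\tilde P/s_1\in\mathcal G_\beta$, contradicting the definition of $y(\beta)$ via Lemma~\ref{tilde}. The only organisational difference is that you pass to subsequential limits of $(x_n,\gamma_n,\ii^{(n)},\jj^{(n)})$ at the outset (using compactness of $\Sigma$ and uniform convergence of the series and its derivatives), whereas the paper carries the $\delta$-dependent estimates through and takes the limit of the normalized functions $g$ at the end; mathematically the two are equivalent.
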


\begin{proof} The proof is similar to the proof of Proposition~\ref{delta}. 
Note first that by \eqref{root} and \eqref{gamma}, 
\begin{equation}\label{root2}
\sin^2\frac{\pi}{b} - \left(\frac{\gamma}{b-\gamma}\right)^2 > 0 \quad \text{for } \gamma \in (1/b, 1).
\end{equation}
Suppose that the assertion of the proposition does not hold for some $\varepsilon > 0$. Then for every $\delta > 0$ there exist
$\ii = (i_1, i_2, \ldots), \:\jj = (j_1, j_2, \ldots) \in\Sigma$ with $i_1\neq j_1$, $x \in (0, 1)$ and $\gamma \in (1/b + \varepsilon, \tilde\gamma_b - \varepsilon)$, such that
\begin{equation}\label{eb1}
\left|Y_{x,\gamma}(\ii)-Y_{x,\gamma}(\jj)\right| \leq \delta, \quad \left|\frac{d}{dx}Y_{x,\gamma}(\ii)-\frac{d}{dx}Y_{x,\gamma}(\jj)\right| \leq \delta, \quad \left|\frac{d}{d\gamma}Y_{x,\gamma}(\ii)-\frac{d}{d\gamma}Y_{x,\gamma}(\jj)\right| \leq\delta.
\end{equation}
Repeating the estimates in \eqref{dY>}, we obtain
\begin{equation}\label{tildedY>}
\left|\frac{d}{dx}Y_{x,\gamma}(\ii)-\frac{d}{dx}Y_{x,\gamma}(\jj)\right|\geq  \frac{8\pi^2\gamma}{b}\sin\frac{\pi}{b} \left|\sin\left(2\pi\frac{2x+i_1+j_1}{2b}\right)\right| - \frac{8\pi^2\gamma^2}{b(b-\gamma)}.
\end{equation}
By \eqref{eb1} and \eqref{tildedY>},
\begin{equation}\label{sin<}
\sin\frac{\pi}{b} \left|\sin\left(\frac{\pi(2x+i_1+j_1)}{b}\right)\right| \leq \frac{\gamma}{b-\gamma} + \frac{\delta b}{8\pi^2 \gamma} < \frac{\gamma}{b-\gamma} + \frac{\delta b^2}{8\pi^2}.
\end{equation}
By the definition of $Y_{x,\gamma}$ (see \eqref{eproj'}), we have
\[
Y_{x,\gamma}(\ii)-Y_{x,\gamma}(\jj)= 2\pi \sum_{n=1}^\infty y_n \gamma^n,
\]
where
\[
y_1 = \sin\left(2\pi\frac{x+i_1}{b}\right) - \sin\left(2\pi\frac{x+j_1}{b}\right) = 2\sin\left(2\pi\frac{i_1-j_1}{2b}\right) \cos\left(2\pi\frac{2x+i_1+j_1}{2b}\right)
\]
and 
\[
|y_n| \leq 2 \quad \text{for } n \geq 2.
\]
Using the fact $i_1 \neq j_1$, we obtain
\begin{equation}\label{y_1'}
|y_1| \geq 2\sin\frac{\pi}{b} \left|\cos\left(2\pi\frac{2x+i_1+j_1}{2b}\right)\right|.
\end{equation}
By \eqref{root2} and due to the fact $\gamma \in (1/b + \varepsilon, 1 - \varepsilon)$, we have 
\[
\sin^2\frac{\pi}{b} - \left(\frac{\gamma}{b-\gamma} + \frac{\delta b^2}{8\pi^2}\right)^2 > c > 0
\]
for sufficiently small $\delta$, where $c$ does not depend on $\delta, \gamma$. Hence, using \eqref{sin<} and \eqref{y_1'}, we obtain
\begin{equation}\label{y_1}
|y_1| >2\; \sqrt{\sin^2\frac{\pi}{b} - \left(\frac{\gamma}{b-\gamma} + \frac{\delta b^2}{8\pi^2}\right)^2} > 2  \sqrt{c}
\end{equation}
for small $\delta$. Consequently, for the function
\[
g(t) = \frac{Y_{x,t}(\ii)-Y_{x,t}(\jj)}{2\pi y_1 t}
\]
we have
\[
g(t) = 1 + \sum_{n=1}^\infty g_n t^n,
\]
where
\[
|g_n| = \frac{|y_{n+1}|}{|y_1|} <
\frac{1}{\sqrt{\sin^2(\pi/b) - (\gamma/(b-\gamma) + \delta b^2/(8\pi^2))^2}}.
\]
This implies that $g \in \mathcal G_{\beta}$
for
\[
\beta = \frac{1}{\sqrt{\sin^2(\pi/b) - (\gamma/(b-\gamma) + \delta b^2/(8\pi^2))^2}}.
\]
On the other hand, by \eqref{eb1} and \eqref{y_1},
\begin{equation}\label{g<}
|g(\gamma)| \leq \frac{\delta}{2\pi |y_1| \gamma} < \frac{\delta b}{4\pi \sqrt{c}}, \qquad |g'(\gamma)| \leq \frac{(\gamma + 1)\delta}{2\pi|y_1|\gamma^2} < \frac{\delta b^2}{2\pi\sqrt{c}}.
\end{equation}
Note that $g$, $\gamma$ and $\beta$ depend on $\delta$. Take a sequence of $\delta$-s tending to $0$. Then we can choose a subsequence such that $\gamma \to \gamma_* \in [1/b + \varepsilon, \tilde\gamma_b-\varepsilon]$, $\beta \to \beta_*$ for
\[
\beta_* = \frac{1}{\sqrt{\sin^2(\pi/b) - (\gamma_*/(b-\gamma_*))^2}} < \frac{1}{\sqrt{\sin^2(\pi/b) - (\tilde\gamma_b/(b-\tilde\gamma_b))^2}}
\]
and $g$ converges uniformly in $[1/b, \tilde\gamma_b]$ to a function $g_* \in \mathcal G_{\beta_*}$.
Since by \eqref{g<}, $g(\gamma)$ and $g'(\gamma)$ tend to $0$ as $\delta \to 0$, we obtain
\[
g_*(\gamma_*) = g'_*(\gamma_*) = 0,
\]
so $y(\beta_*) \leq \gamma^*$. This is a contradiction, because by Lemma~\ref{tilde},
\[
y(\beta_*) = y\left(\frac{1}{\sqrt{\sin^2(\pi/b) - 1/(b^2\lambda_* - 1)^2}}\right) > \frac{1}{b\lambda_*} = \gamma_*
\]
for $\lambda_* = 1/(b\gamma_*) > 1/(b\tilde\gamma_b) = \tilde\lambda_b$.
This ends the proof.
\end{proof}

As a simple consequence of Proposition~\ref{ltrans} one can prove the following statement (for the proof we refer to e.g. \cite[Lemma~7.3]{SSU}).

\begin{lemma}\label{ltransleb}
For every $\varepsilon > 0$ there exists a constant $C>0$ such that for every $\ii = (i_1, i_2, \ldots, ),\: \jj = (j_1, j_2, \ldots, )\in\Sigma$ with $i_1\neq j_1$,
\[
\mathcal{L}_2\left(\left\{(x,\gamma)\in (0,1)\times (1/b+ \varepsilon, \tilde\gamma_b - \varepsilon): \left|Y_{x,\gamma}(\ii)-Y_{x,\gamma}(\jj)\right|<r\right\}\right)\leq Cr
\]
for every $r>0$, where $\mathcal{L}_2$ is the Lebesgue measure on the plane.
\end{lemma}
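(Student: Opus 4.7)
The plan is to prove this bound by reducing it to a one-dimensional Lebesgue estimate on each $x$-slice via Fubini, using Proposition~\ref{ltrans} to guarantee strict monotonicity inside the slice, and then capping both the length of each monotonic piece and the number of such pieces.

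First, I would set $G(x,\gamma) := Y_{x,\gamma}(\ii) - Y_{x,\gamma}(\jj)$ and $\Omega := (0,1) \times (1/b + \varepsilon, \tilde\gamma_b - \varepsilon)$, and apply Proposition~\ref{ltrans} to the given $\varepsilon$ to obtain $\delta > 0$. For $r \geq \delta$ the estimate is trivial since $\mathcal{L}_2(\Omega) \leq 1 \leq r/\delta$, so I can assume $0 < r < \delta$. On $E_r := \{(x,\gamma)\in\Omega : |G(x,\gamma)| < r\}$, Proposition~\ref{ltrans} forces $|G_x|+|G_\gamma|>\delta$ pointwise, hence $E_r = E_r^{(1)} \cup E_r^{(2)}$, where $E_r^{(1)}:=E_r\cap\{|G_x|>\delta/2\}$ and $E_r^{(2)}:=E_r\cap\{|G_\gamma|>\delta/2\}$. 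By the symmetric roles of the two variables, it suffices to estimate $\mathcal{L}_2(E_r^{(2)})$.

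Next, Fubini gives $\mathcal{L}_2(E_r^{(2)}) = \int_0^1 \mathcal{L}(E_r^{(2),x})\,dx$, where $E_r^{(2),x}$ denotes the $x$-slice. On each connected component $I$ of $E_r^{(2),x}$, continuity of $G_\gamma$ together with $|G_\gamma|>\delta/2$ forces $G(x,\cdot)$ to be strictly monotonic on $I$; combined with the range constraint $|G|<r$, this yields $|I| \leq 4r/\delta$.

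The step I expect to require the most care --- although it is exactly the standard one from \cite[Lemma~7.3]{SSU} --- is bounding the number $N$ of connected components of each slice $E_r^{(2),x}$ by a constant $N_0$ independent of $r$, $x$, $\ii$ and $\jj$. The needed input is a uniform bound $|G_{\gamma\gamma}| \leq K$ on $\Omega$, obtained by termwise differentiation of \eqref{eproj'} together with $\gamma < \tilde\gamma_b - \varepsilon < 1$. Between two consecutive components of $E_r^{(2),x}$, a sign analysis shows that $G(x,\cdot)$ cannot remain monotonic throughout the intervening gap (otherwise its values at the two boundary points of the gap would be incompatible with the component assignments), so $G_\gamma$ must change sign there and hence vanish somewhere inside; the Lipschitz bound $|G_{\gamma\gamma}|\leq K$ then forces a minimum gap length proportional to $\delta/K$. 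This caps $N$ by $N_0 := 1 + K(\tilde\gamma_b - 1/b)/\delta$, giving $\mathcal{L}(E_r^{(2),x}) \leq 4 N_0 r/\delta$, and integrating over $x \in (0,1)$ completes the proof.
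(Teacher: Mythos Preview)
Your overall architecture matches what the paper intends (it does not give a proof but refers to \cite[Lemma~7.3]{SSU}, which is precisely the kind of argument you are reconstructing): Fubini plus a one-variable sub-level estimate on each slice, driven by Proposition~\ref{ltrans} and a uniform $C^2$ bound. That part is fine.

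The gap is in your component-counting step. Your claim that between two consecutive components of $E_r^{(2),x}$ the function $G(x,\cdot)$ cannot remain monotonic, and hence $G_\gamma$ must vanish, is false. A component of $E_r^{(2),x}=\{|G|<r\}\cap\{|G_\gamma|>\delta/2\}$ can end because $|G_\gamma|$ drops to $\delta/2$, not because $|G|$ reaches $r$. In that case $G$ may stay monotone across the gap while $G_\gamma$ oscillates in $(0,\delta/2]$ without ever hitting $0$; for instance $G_\gamma(\gamma)=\tfrac{\delta}{2}+\eta\sin(N\gamma)$ with small $\eta$ and moderate $N$ keeps $|G_{\gamma\gamma}|$ bounded yet produces many components of $\{|G_\gamma|>\delta/2\}$ inside $\{|G|<r\}$ with no zero of $G_\gamma$ in between. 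So the Lipschitz bound on $G_{\gamma\gamma}$ does not force a minimum gap length in the way you assert, and the bound on $N$ fails.

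The standard fix, which is what underlies the SSU argument, is to avoid counting components altogether and instead subdivide the $\gamma$-interval into pieces of length $\sim \delta/K$: on each such subinterval $G_\gamma$ varies by at most $\delta/4$, so if the subinterval meets $E_r^{(2),x}$ then $|G_\gamma|>\delta/4$ throughout it, $G$ is monotone there, and the portion with $|G|<r$ has length $\le 8r/\delta$. Summing over the $O(K|J|/\delta)$ subintervals gives the required $Cr$ bound, and the analogous estimate for $E_r^{(1)}$ goes through by slicing in $x$. With this replacement your proof is complete.
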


To state next results, we need to introduce some notation. For $\ii=(i_1,i_2,\dots)\in\Sigma$ let
\[
\left.\ii\right|_n = (i_1,\dots,i_n).
\]
For a finite word $\overline{k}\in\Sigma^*$ of length $n$ let
\[
\A_{\overline{k}}=\left\{(\ii,\jj)\in\Sigma \times \Sigma:\left.\ii\right|_n = \left.\jj\right|_n = \overline{k} \text{ and } \left.\ii\right|_{n+1} \neq \left.\jj\right|_{n+1}\right\}.
\]
We extend this definition for the empty word $\overline{k}$ setting
\[
\A_{\emptyset}=\left\{(\ii,\jj)\in\Sigma \times \Sigma :\left.\ii\right|_1 \neq \left.\jj\right|_1\right\}.
\]
For $N \geq 1$ we write
\[
\left.\A_{\overline{k}}\right|_N = \{(\ii|_N, \jj|_N): (\ii, \jj) \in
\A_{\overline{k}}\}.
\]
For $\overline{k}\in\Sigma^*$ and $\ii \in \Sigma$ we write $\overline{k}\ii$ for the standard concatenation.

For a finite length word $\overline{k} = (k_1,\dots,k_n)\in\Sigma^*$ and $x \in [0, 1]$ let
\[
v_{\overline{k}}(x)=\frac{x}{b^n}+\frac{k_1}{b^n}+\cdots+\frac{k_n}{b}.
\]
Let us observe that if $\left.\ii\right|_n = \left.\jj\right|_n = \overline{k}$ for some  $\ii,\jj\in \Sigma$, then
\begin{equation}\label{einv}
\left|Y_{x,\gamma}(\ii)-Y_{x,\gamma}(\jj)\right|=\gamma^n\left|Y_{v_{\overline{k}}(x),\gamma}(\sigma^n(\ii))-Y_{v_{\overline{k}}(x),\gamma}(\sigma^n(\jj))\right|,
\end{equation}
where $\sigma$ denotes the left-side shift on $\Sigma$.

Notice that because of the structure of the measure $m_{x,\gamma}$, it is not possible to apply directly the transversality method and Lemma~\ref{ltransleb}. To avoid this difficulty, we introduce the following lemma.

\begin{lemma}\label{ldecomp}
Let $(\ii, \jj) \in \A_{\emptyset}$. Then for every $r>0$ there exists $N=N(r)$ such that
\begin{equation}\label{eineq}
\left|Y_{x,\gamma}(\ii)-Y_{x,\gamma}(\jj)\right|<r\quad \Rightarrow \quad \left|Y_{x,\gamma}(\left.\ii\right|_N\mathbf{0})-Y_{x,\gamma}(\left.\jj\right|_N\mathbf{0})\right|<2r
\end{equation}
for every $x \in [0,1]$ and $\gamma\in (1/b, \tilde\gamma_b)$, where $\mathbf{0}=(0,0,\dots)$.
\end{lemma}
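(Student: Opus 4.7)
The plan is a direct tail estimate. Since the sequences $\ii$ and $\ii|_N\mathbf{0}$ share their first $N$ coordinates, the formula \eqref{eproj'} gives
\[
Y_{x,\gamma}(\ii) - Y_{x,\gamma}(\ii|_N\mathbf{0}) = 2\pi\sum_{n=N+1}^{\infty}\gamma^n\left[\sin\!\left(2\pi\!\left(\tfrac{x}{b^n}+\tfrac{i_1}{b^n}+\cdots+\tfrac{i_n}{b}\right)\right) - \sin\!\left(2\pi\!\left(\tfrac{x}{b^n}+\tfrac{i_1}{b^n}+\cdots+\tfrac{i_N}{b^{n-N}}\right)\right)\right],
\]
because for $n\leq N$ the two sine arguments agree. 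Bounding each bracket trivially by $2$, I would get
\[
\bigl|Y_{x,\gamma}(\ii) - Y_{x,\gamma}(\ii|_N\mathbf{0})\bigr| \leq 4\pi\sum_{n=N+1}^{\infty}\gamma^n = \frac{4\pi\gamma^{N+1}}{1-\gamma},
\]
and exactly the same estimate for $\jj$ in place of $\ii$.

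Next I would use the triangle inequality to transfer the hypothesis. Combining the two tail bounds yields
\[
\bigl|Y_{x,\gamma}(\ii|_N\mathbf{0}) - Y_{x,\gamma}(\jj|_N\mathbf{0})\bigr| \leq \bigl|Y_{x,\gamma}(\ii) - Y_{x,\gamma}(\jj)\bigr| + \frac{8\pi\gamma^{N+1}}{1-\gamma}.
\]
To make the error term uniform in $\gamma\in(1/b,\tilde\gamma_b)$, I would note that $\gamma\mapsto \gamma^{N+1}/(1-\gamma)$ is increasing on $(0,1)$, so it is controlled by $\tilde\gamma_b^{N+1}/(1-\tilde\gamma_b)$. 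Because $\tilde\lambda_b>1/b$ forces $\tilde\gamma_b<1$ via \eqref{tildegamma}, this quantity tends to $0$ as $N\to\infty$.

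Given $r>0$, I would therefore choose $N=N(r)$ so that
\[
\frac{8\pi\tilde\gamma_b^{N+1}}{1-\tilde\gamma_b} < r.
\]
Then whenever $|Y_{x,\gamma}(\ii)-Y_{x,\gamma}(\jj)|<r$, the triangle inequality displayed above gives $|Y_{x,\gamma}(\ii|_N\mathbf{0})-Y_{x,\gamma}(\jj|_N\mathbf{0})|<2r$, as required. There is essentially no obstacle: the only subtle point is to obtain a choice of $N$ that works simultaneously for all $\gamma\in(1/b,\tilde\gamma_b)$, and this is exactly what the monotonicity of the tail bound and the inequality $\tilde\gamma_b<1$ deliver. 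Note that the hypothesis $(\ii,\jj)\in\A_\emptyset$, i.e.\ $i_1\neq j_1$, is not needed for this lemma itself; it is carried along because the truncated pair $(\ii|_N\mathbf{0},\jj|_N\mathbf{0})$ then again lies in $\A_\emptyset$, which is what is needed when the lemma is combined with Lemma~\ref{ltransleb} in the subsequent transversality argument.
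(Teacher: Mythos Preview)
Your proof is correct and follows essentially the same route as the paper: both arguments bound $|Y_{x,\gamma}(\ii)-Y_{x,\gamma}(\ii|_N\mathbf{0})|$ by the tail $\frac{4\pi\gamma^{N+1}}{1-\gamma}$ (the paper obtains the identical estimate via the self-similarity relation \eqref{einv}), add the two contributions, and use $\gamma<\tilde\gamma_b<1$ to choose $N$ uniformly. The only blemish is a harmless index slip in your displayed formula (the last term should be $i_N/b^{\,n-N+1}$), which does not affect the bound.
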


\begin{proof}
By \eqref{einv}, we have
\begin{align*}
&\bigl| |Y_{x,\gamma}(\ii)-Y_{x,\gamma}(\jj)|-|Y_{x,\gamma}(\left.\ii\right|_N\mathbf{0})-Y_{x,\gamma}(\left.\jj\right|_N\mathbf{0})|\bigr|\\
&\leq\left|\left(Y_{x,\gamma}(\ii)-Y_{x,\gamma}(\left.\ii\right|_N\mathbf{0})\right)-\left(Y_{x,\gamma}(\jj)-Y_{x,\gamma}(\left.\jj\right|_N\mathbf{0})\right)\right|\\
&\leq\gamma^{N}\left|Y_{v_{\left.\ii\right|_N}(x),\gamma}(\sigma^{N}(\ii))-Y_{v_{\left.\ii\right|_N}(x),\gamma}(\mathbf{0})\right|+\gamma^{N}\left|Y_{v_{\left.\jj\right|_N}(x),\gamma}(\sigma^{N}(\jj))-Y_{v_{\left.\jj\right|_N}(x),\gamma}(\mathbf{0})\right|\\
&\leq\gamma^N\frac{8\pi\gamma}{1-\gamma}< {\tilde\gamma_b}^N\frac{8\pi\tilde\gamma_b}{1-\tilde\gamma_b} \leq r
\end{align*}
for sufficiently large $N = N(r)$, which implies the inequality \eqref{eineq}.
\end{proof}

To use Corollary~\ref{cor_led}, we show the following.
\begin{prop}\label{pac}
For Lebesgue almost every $(x, \gamma) \in(0,1) \times (1/b, \tilde\gamma_b)$ the measure $m_{x,\gamma}$ is absolutely continuous $($in particular, $\dim m_{x,\gamma} = 1)$.
\end{prop}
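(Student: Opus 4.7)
My plan is to adapt the Peres--Solomyak transversality argument to the two-parameter family $(x,\gamma)$. First I would fix $\varepsilon > 0$, set $I_\varepsilon = (1/b+\varepsilon,\tilde\gamma_b-\varepsilon)$, and reduce the statement to showing that $m_{x,\gamma}$ is absolutely continuous for Lebesgue almost every $(x,\gamma) \in (0,1)\times I_\varepsilon$; the full proposition would then follow by taking $\varepsilon_k \to 0$ along a countable sequence. By the classical differentiation theorem on $\mathbb R$, it is enough to show that $\liminf_{r\to 0} m_{x,\gamma}(B_r(y))/r < \infty$ for $m_{x,\gamma}$-a.e.\ $y$. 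Using Fatou's lemma together with Fubini and the identity
\[
\int m_{x,\gamma}(B_r(y))\,dm_{x,\gamma}(y) = (\mathbb P\times \mathbb P)(\{(\ii,\jj) : |Y_{x,\gamma}(\ii)-Y_{x,\gamma}(\jj)|<r\}),
\]
the problem would reduce to proving
\[
I_r := \int_{(0,1)\times I_\varepsilon}(\mathbb P\times \mathbb P)(\{(\ii,\jj) : |Y_{x,\gamma}(\ii)-Y_{x,\gamma}(\jj)|<r\})\,dx\,d\gamma = O(r) \text{ as } r\to 0.
\]

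Next I would decompose $\Sigma\times\Sigma$, off the diagonal, according to the first disagreement: $\Sigma\times\Sigma = \Delta \cup \bigcup_{n\ge 0}\bigcup_{|\overline k|=n} \A_{\overline k}$. For $(\ii,\jj)\in \A_{\overline k}$ with $|\overline k|=n$ the self-similarity \eqref{einv} rewrites the event $|Y_{x,\gamma}(\ii)-Y_{x,\gamma}(\jj)|<r$ as $|Y_{v_{\overline k}(x),\gamma}(\sigma^n\ii)-Y_{v_{\overline k}(x),\gamma}(\sigma^n\jj)|<r/\gamma^n$, with $(\sigma^n\ii,\sigma^n\jj) \in \A_\emptyset$. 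Since $\mathbb P$ is Bernoulli, $\ii|_n$ is independent of $\sigma^n\ii$ (and similarly for $\jj$), so
\[
(\mathbb P\times \mathbb P)(\A_{\overline k}\cap \{\cdots\}) = b^{-2n}(\mathbb P\times \mathbb P)(\{(\hat\ii,\hat\jj)\in \A_\emptyset : |Y_{v_{\overline k}(x),\gamma}(\hat\ii)-Y_{v_{\overline k}(x),\gamma}(\hat\jj)|<r/\gamma^n\}).
\]

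I would then sum over $|\overline k|=n$, integrate in $(x,\gamma)$, and change variables via $u=v_{\overline k}(x)$ (Jacobian $b^n$). Because the intervals $v_{\overline k}((0,1))$ with $|\overline k|=n$ tile $(0,1)$ up to a null set, the sum over $\overline k$ collapses into a single integral:
\[
\sum_{|\overline k|=n}\int_{(0,1)\times I_\varepsilon}(\mathbb P\times \mathbb P)(\A_{\overline k}\cap\{\cdots\})\,dx\,d\gamma = b^{-n}\int_{\A_\emptyset}\mathcal L_2(\{(u,\gamma)\in(0,1)\times I_\varepsilon : |Y_{u,\gamma}(\hat\ii)-Y_{u,\gamma}(\hat\jj)|<r/\gamma^n\})\,d(\mathbb P\times\mathbb P).
\]
Bounding $r/\gamma^n \le r/(1/b+\varepsilon)^n$ and applying Lemma~\ref{ltransleb}, the inner $\mathcal L_2$ is at most $Cr/(1/b+\varepsilon)^n$, so level $n$ contributes at most $((b-1)Cr/b)(b(1/b+\varepsilon))^{-n}$. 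Since $b(1/b+\varepsilon) = 1+b\varepsilon > 1$, the geometric series sums to $I_r \le C_\varepsilon r$; the proposition then follows, and Theorem~C is obtained via Fubini and Corollary~\ref{cor_led}.

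The main obstacle will be the precise matching of scales. The Jacobian $b^n$ from $x\mapsto v_{\overline k}(x)$, the conditional weight $b^{-2n}$ from $\A_{\overline k}\subset \A_\emptyset$, and the transversality loss $1/\gamma^n$ combine into the net factor $(b(1/b+\varepsilon))^{-n}$; the whole argument turns precisely on the fact that this factor is strictly less than $1$ on $I_\varepsilon$. The constraint $\gamma<\tilde\gamma_b$ enters only through Lemma~\ref{ltransleb}, which holds on $(1/b,\tilde\gamma_b)$ but not beyond. Lemma~\ref{ldecomp} would provide a convenient replacement of the infinite-sequence condition $|Y_{x,\gamma}(\ii)-Y_{x,\gamma}(\jj)|<r$ by the finite-word condition $|Y_{x,\gamma}(\ii|_N\mathbf 0)-Y_{x,\gamma}(\jj|_N\mathbf 0)|<2r$, which offers an alternative organisation of the pushforward--tiling step that avoids a direct use of the Bernoulli independence on infinite cylinders; its role here is largely technical, with the heart of the proof still residing in Lemma~\ref{ltransleb}.
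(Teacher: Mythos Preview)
Your proposal is correct and follows essentially the same route as the paper: fix $\varepsilon>0$, reduce to showing $\iint\int\underline{D}(m_{x,\gamma},y)\,dm_{x,\gamma}\,d\mathcal L_2<\infty$ via Fatou and Fubini, decompose $\Sigma\times\Sigma$ by the first disagreement, use the self-similarity \eqref{einv} together with the change of variables $u=v_{\overline k}(x)$ and the tiling $\bigsqcup_{\overline k}R_{\overline k,\varepsilon}=R_\varepsilon$, bound $r/\gamma^n\le r(1/b+\varepsilon)^{-n}$, apply Lemma~\ref{ltransleb}, and sum the resulting geometric series $\sum(1+b\varepsilon)^{-n}$.

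The one organisational difference is exactly the one you identify in your final paragraph: you invoke the Bernoulli independence $(\mathbb P\times\mathbb P)(\A_{\overline k}\cap(\sigma^n\times\sigma^n)^{-1}B)=b^{-2n}(\mathbb P\times\mathbb P)(\A_\emptyset\cap B)$ directly and then swap the $(\ii,\jj)$-integral with the $(x,\gamma)$-integral by Fubini, whereas the paper first uses Lemma~\ref{ldecomp} to make the integrand depend on only finitely many coordinates, turning the $(\ii,\jj)$-integral into the finite sum $b^{-n-2N}\sum_{(\overline l,\overline m)\in\A_\emptyset|_N}$. Your variant is slightly more streamlined and shows that Lemma~\ref{ldecomp} is not strictly needed; the paper's version has the minor advantage that Lemma~\ref{ltransleb} is applied to a countable family of fixed pairs $(\overline l\mathbf 0,\overline m\mathbf 0)$ rather than inside an integral, but this is a cosmetic difference.
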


\begin{proof} Take $\varepsilon > 0$.
We will prove that $m_{x,\gamma}$ is absolutely continuous for Lebesgue almost every $(x,\gamma)\in R_{\varepsilon}$, where
\[
R_\varepsilon =[0,1)\times\left(1/b + \varepsilon,\tilde\gamma_b - \varepsilon\right)
\]
for an arbitrarily small $\varepsilon>0$, which will imply the statement. Denote by
\[
\underline{D}(m_{x,\gamma},y)=\liminf_{r\rightarrow0}\frac{m_{x,\gamma}((y-r, y+r))}{2r}
\]
the lower density of the measure $m_{x,\gamma}$ at a point $y \in \mathbb R$. By \cite[Theorem 2.12]{Mat}, if $\underline{D}(m_{x,\gamma},y)<\infty$ for $m_{x,\gamma}$-almost every $y$, then the measure $m_{x,\gamma}$ is absolutely continuous. Therefore, it is enough to show that
\begin{equation*}
\mathcal{I}:=\iint_{R_{\varepsilon}}\int_{\R}\underline{D}(m_{x,\gamma},y)\: dm_{x,\gamma}(y)d\mathcal{L}_2(x,\gamma)<\infty.
\end{equation*}
By the Fatou Lemma, the Fubini Theorem and the definition of the measure $m_{x,\gamma}$, we have
\begin{equation}\label{I}
\mathcal{I}\leq\liminf_{r\rightarrow0}\frac{1}{2r}\iint_{\Sigma\times\Sigma}\mathcal{L}_2\left(\left\{(x,\gamma)\in R_{\varepsilon}:\left|Y_{x,\gamma}(\ii)-Y_{x,\gamma}(\jj)\right|<r\right\}\right)d\mathbb{P}(\ii)d\mathbb{P}(\jj).
\end{equation}
We can write
\begin{multline}\label{II}
\iint_{\Sigma\times\Sigma}\mathcal{L}_2\left(\left\{(x,\gamma)\in R_{\varepsilon}:\left|Y_{x,\gamma}(\ii)-Y_{x,\gamma}(\jj)\right|<r\right\}\right)d\mathbb{P}(\ii)d\mathbb{P}(\jj)\\
=\sum_{n=0}^{\infty}\sum_{\overline{k}\in\left\{0,\ldots, b-1\right\}^n}\iint_{\A_{\overline{k}}}\mathcal{L}_2\left(\left\{(x,\gamma)\in R_{\varepsilon}:\left|Y_{x,\gamma}(\ii)-Y_{x,\gamma}(\jj)\right|<r\right\}\right)d\mathbb{P}(\ii)d\mathbb{P}(\jj).
\end{multline}

Let
\[
R_{\overline{k},\varepsilon}=[v_{\overline{k}}(0),v_{\overline{k}}(1))\times(1/b+\varepsilon,\tilde\gamma_b-\varepsilon).
\]
By \eqref{einv} and Lemma~\ref{ldecomp}, for any $\ii,\jj\in \A_{\overline{k}}$ we have
\begin{align*}
&\mathcal{L}_2\left(\left\{(x,\gamma)\in R_{\varepsilon}:\left|Y_{x,\gamma}(\ii)-Y_{x,\gamma}(\jj)\right|<r\right\}\right)\\
&=\mathcal{L}_2\left(\left\{(x,\gamma)\in R_{\varepsilon}:\left|Y_{v_{\overline{k}}(x),\gamma}(\sigma^n(\ii))-Y_{v_{\overline{k}}(x),\gamma}(\sigma^n(\jj))\right|<r\gamma^{-n}\right\}\right)\\
&=b^n\mathcal{L}_2\left(\left\{(v,\gamma)\in R_{\overline{k},\varepsilon}:\left|Y_{v,\gamma}(\sigma^n(\ii))-Y_{v,\gamma}(\sigma^n(\jj))\right|<r\gamma^{-n}\right\}\right)\\
&\leq b^n\mathcal{L}_2\left(\left\{(v,\gamma)\in R_{\overline{k},\varepsilon}:\left|Y_{v,\gamma}(\sigma^n(\ii))-Y_{v,\gamma}(\sigma^n(\jj))\right|<r\left( \frac 1 b +\varepsilon\right)^{-n}\right\}\right)\\
&\leq b^n\mathcal{L}_2\left(\left\{(v,\gamma)\in R_{\overline{k},\varepsilon}:\left|Y_{v,\gamma}(\left.\sigma^n(\ii)\right|_N\mathbf{0})-Y_{v,\gamma}(\left.\sigma^n(\jj)\right|_N\mathbf{0})\right|<2r\left( \frac 1 b +\varepsilon\right)^{-n}\right\}\right),
\end{align*}
where $N$ depends on $n,r$. Hence,
\begin{align*}
&\sum_{\overline{k}\in\left\{0,\ldots, b-1\right\}^n}\iint_{\A_{\overline{k}}}\mathcal{L}_2\left(\left\{(x,\gamma)\in R_{\varepsilon}:\left|Y_{x,\gamma}(\ii)-Y_{x,\gamma}(\jj)\right|<r\right\}\right)d\mathbb{P}(\ii)d\mathbb{P}(\jj)\\
&\le b^n\sum_{\overline{k}\in\left\{0,\ldots, b-1\right\}^n}\\
&\iint_{\A_{\overline{k}}}
\mathcal{L}_2\left(\left\{(v,\gamma)\in R_{\overline{k},\varepsilon}:\left|Y_{v,\gamma}(\left.\sigma^n(\ii)\right|_N\mathbf{0})-Y_{v,\gamma}(\left.\sigma^n(\jj)\right|_N\mathbf{0})\right|<2r\left( \frac 1 b +\varepsilon\right)^{-n}\right\}\right)d\mathbb{P}(\ii)d\mathbb{P}(\jj)\\
&=\frac{1}{b^{n+2N}}
\sum_{\overline{k}\in\left\{0,\ldots,b-1\right\}^n}\sum_{(\overline{l},\overline{m})\in \left.\A_{\emptyset}\right|_N} \mathcal{L}_2\left(\left\{(v,\gamma)\in R_{\overline{k},\varepsilon}:\left|Y_{v,\gamma}(\overline{l}\mathbf{0})-Y_{v,\gamma}(\overline{m}\mathbf{0})\right|<2r \left( \frac 1 b +\varepsilon\right)^{-n}\right\}\right)\\
&=\frac{1}{b^{n+2N}}\sum_{(\overline{l},\overline{m})\in\left.\A_{\emptyset}\right|_N}\mathcal{L}_2\left(\left\{(x,\gamma)\in R_{\varepsilon}:\left|Y_{x,\gamma}(\overline{l}\mathbf{0})-Y_{x,\gamma}(\overline{m}\mathbf{0})\right|<2r \left( \frac 1 b +\varepsilon\right)^{-n}\right\}\right),
\end{align*}
where in the latter equality we used
\[
R_{\varepsilon}=\bigsqcup_{\overline{k}\in\left\{0,\ldots, b-1\right\}^n}R_{\overline{k},\varepsilon}.
\]
Hence, using \eqref{I}, \eqref{II} and Lemma~\ref{ltransleb}, we get
\begin{align*}
\mathcal{I}&\leq
\liminf_{r\rightarrow0}\frac{1}{2r}\sum_{n=0}^{\infty}
\frac{1}{b^{n+2N}}\sum_{(\overline{l},\overline{m})\in\left.\A_{\emptyset}\right|_N}\mathcal{L}_2\left(\left\{(x,\gamma)\in R_{\varepsilon}:\left|Y_{x,\gamma}(\overline{l}\mathbf{0})-Y_{x,\gamma}(\overline{m}\mathbf{0})\right|<2r \left( \frac 1 b +\varepsilon\right)^{-n}\right\}\right)\\&\leq
\liminf_{r\rightarrow0}\frac{1}{2r}\sum_{n=0}^{\infty}\frac{1}{b^{n+2N}} \sum_{(\overline{l},\overline{m})\in\left.\A_{\emptyset}\right|_N} 2Cr \left( \frac 1 b +\varepsilon\right)^{-n} = C\sum_{n=0}^{\infty}(1 + b \varepsilon)^{-n},
\end{align*}
which is finite since $\varepsilon>0$.
\end{proof}

Now Theorem~C follows directly from Proposition~\ref{pac}, the Fubini Theorem, \eqref{gamma}, \eqref{tildegamma} and Corollary~\ref{cor_led}.

To obtain more precise estimates of $\tilde\lambda_2$, $\tilde\lambda_3$, $\tilde\lambda_4$ presented in Corollary~D, one needs to find suitable $(*)$-functions. To this aim, we use the following result by Peres and Solomyak.

\begin{lemma}[Peres, Solomyak \mbox{\cite[Lemma~5.1]{peso2}}]\label{lpeso}
Let $\beta\geq1$. Suppose that for some positive integer $k=k(\beta)$ and a real number $\eta=\eta(\beta)$ there exists a function $g_{\beta}: \mathbb R \to \mathbb R$,
\[
g_{\beta}(t)=1-\beta\sum_{n=1}^{k-1}t^n+\eta t^k+\beta\sum_{n=k+1}^{\infty}t^n
\]
such that for some $t_\beta\in(0,1)$,
\[
g_\beta(t_\beta)>0\quad \text{and} \quad g_\beta'(t_\beta)<0.
\]
Then $y(\beta)>t_\beta$. More precisely, there exists $\varepsilon>0$ such that for every $g\in\mathcal{G}_\beta$ and every $t\in(0,t_\beta)$,
\[
g(t)<\varepsilon \quad \Rightarrow \quad g'(t)<-\varepsilon.
\]
\end{lemma}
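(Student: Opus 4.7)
The plan is to exhibit $g_\beta$ as the minimiser on $\mathcal{G}_\beta$ of a suitably chosen linear functional (via a bang-bang / linear programming argument), and then to use the hypotheses $g_\beta(t_\beta)>0$ and $g_\beta'(t_\beta)<0$ as a separating certificate that forbids $(g(t),g'(t))=(0,0)$ for every $g \in \mathcal{G}_\beta$ on $(0,t_\beta]$. Concretely, for fixed $t\in(0,1)$ and $s>0$, I would rewrite
\[
L_s(g) := g(t) - s g'(t) = 1 + \sum_{n\geq 1} g_n\, t^{n-1}(t - sn),
\]
which is linear in the coefficients $g_n\in[-\beta,\beta]$. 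Setting $s=t/k$ with $k$ as in the hypothesis, the $g_n$-weight equals $t^n(1-n/k)$, strictly positive for $n<k$, vanishing at $n=k$, and strictly negative for $n>k$. Bang-bang optimisation then identifies the minimiser of $L_{t/k}$ on $\mathcal{G}_\beta$ as $g_n=-\beta$ for $n<k$ and $g_n=+\beta$ for $n>k$, with $g_k\in[-\beta,\beta]$ arbitrary. Substituting the hypothesised $g_\beta$ (even if $\eta\notin[-\beta,\beta]$, since the pivot weight vanishes) then gives
\[
\min_{g\in\mathcal{G}_\beta} L_{t/k}(g) \;=\; F(t) := g_\beta(t)-(t/k)g_\beta'(t),
\]
with $F$ independent of $\eta$.

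Expanding,
\[
F(t) = 1 - \frac{\beta}{k}\Bigl[\sum_{n=1}^{k-1}(k-n)t^n + \sum_{n=k+1}^{\infty}(n-k)t^n\Bigr],
\]
so $F(0)=1$, all higher coefficients are strictly negative (as $\beta\geq 1$), and $F$ is strictly decreasing on $(0,1)$. The hypothesis immediately yields $F(t_\beta) = g_\beta(t_\beta) + (t_\beta/k)\bigl(-g_\beta'(t_\beta)\bigr) > 0$, so by monotonicity $F(t)\geq F(t_\beta)>0$ throughout $(0,t_\beta]$, and by continuity $F$ stays strictly positive on some enlarged interval $(0,t_\beta+\delta)$. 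Combined with the LP lower bound,
\[
g(t)-(t/k)g'(t) \;\geq\; F(t) \;\geq\; F(t_\beta)/2 \;>\; 0
\]
for every $g\in\mathcal{G}_\beta$ and every $t\in(0,t_\beta+\delta)$; hence $(g(t),g'(t))=(0,0)$ is impossible there, which gives $y(\beta)>t_\beta$. The quantitative refinement then follows by choosing any $\varepsilon<kF(t_\beta)/(t_\beta+k)$: if $g(t)<\varepsilon$ for some $t\in(0,t_\beta)$, then $(t/k)g'(t)< g(t)-F(t_\beta)<\varepsilon-F(t_\beta)$, and rearranging using $t\leq t_\beta$ yields $g'(t)<-\varepsilon$.

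The main obstacle is spotting the correct linear functional, specifically the choice $s=t/k$. It is this choice that simultaneously (i) makes the bang-bang LP minimiser match the $(*)$-form of the hypothesised $g_\beta$ and (ii) annihilates the pivot contribution so that the unspecified coefficient $\eta$ disappears from $F(t)$. Without (ii), a single $(*)$-function would only control a single $t$; with it, one $(*)$-function certificates the whole interval $(0,t_\beta]$ at once. Once the right $s$ is identified, strict monotonicity of $F$ reduces the whole problem to the one-point inequality at $t=t_\beta$ supplied by the hypotheses, and the quantitative refinement is routine bookkeeping.
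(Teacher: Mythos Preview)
Your argument is correct. Note, however, that the paper does not supply its own proof of this lemma: it is quoted verbatim as \cite[Lemma~5.1]{peso2} and used as a black box to estimate $\tilde\lambda_2,\tilde\lambda_3,\tilde\lambda_4$. Your proof is precisely the Peres--Solomyak linear-programming (``$(*)$-function'') argument from that reference: the choice $s=t/k$ in the linear functional $L_s(g)=g(t)-sg'(t)$ makes the coefficient of $g_n$ equal to $t^n(1-n/k)$, so the bang-bang minimiser over $g_n\in[-\beta,\beta]$ is exactly the $(*)$-pattern of $g_\beta$ with the pivot term dropping out; monotonicity of $F(t)=g_\beta(t)-(t/k)g_\beta'(t)$ then propagates the single hypothesis at $t_\beta$ to the whole interval. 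The quantitative tail with $\varepsilon<kF(t_\beta)/(k+t_\beta)$ is also handled correctly.
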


Let
\[
\beta = \frac{1}{\sqrt{\sin^2(\pi/b) - 1/(b^2\lambda - 1)^2}}.
\]
and consider functions $g_\beta$ defined in Lemma~\ref{lpeso}.

For $b = 2$ take $k = 4$, $\eta = 0.81$, $\lambda = 0.81$. Then $g_\beta(0.62) > 0$ and $g'_\beta(0.62) < 0$, so $y(\beta) > 0.62$. On the other hand, $1/(2\lambda) = 1/1.62 < 0.62$. By Lemma~\ref{tilde}, $\tilde\lambda_2 < 0.81$.

For $b = 3$ take $k = 4$, $\eta = 1.43398$, $\lambda = 0.55$. Then $g_{\beta}(0.6061) > 0$ and $g'_{\beta}(0.6061) < 0$, so $y(\beta) > 0.6061$. On the other hand, $1/(3\lambda) = 1/1.65 < 0.6061$. By Lemma~\ref{tilde}, $\tilde\lambda_3 < 0.55$.

For $b = 4$ take $k = 3$, $\eta = -0.298$, $\lambda = 0.44$. Then $g_{\beta}(0.569) > 0$ and $g'_{\beta}(0.569) < 0$, so $y(\beta) > 0.569$. On the other hand, $1/(4\lambda) = 1/1.76 < 0.569$. By Lemma~\ref{tilde}, $\tilde\lambda_4 < 0.44$.

\end{document}